\documentclass[11pt]{amsart}
\usepackage[margin=30mm]{geometry}
\usepackage{amsmath,amssymb}
\usepackage{amsthm}
\usepackage{mathrsfs}

\newtheorem{thm}{Theorem}[section]

\newtheorem{lem}{Lemma}[section]

\newtheorem{defi}{Definition}[section]
\newtheorem{ex}{Example}[section]
\newtheorem{rem}{Remark}[section]

\newtheorem*{lemma}{\it{Lemma}}

\usepackage{enumitem}

\begin{document}

\title{On non-chaotic hyperbolic sets}
\author{Noriaki Kawaguchi}
\subjclass[2020]{37B05, 37B40, 37B65, 37D05, 37D45}
\keywords{chaotic dynamics, hyperbolic sets, expansive, shadowing, topological entropy}
\address{Department of Mathematical and Computing Science, School of Computing, Institute of Science Tokyo, 2-12-1 Ookayama, Meguro-ku, Tokyo 152-8552, Japan}
\email{gknoriaki@gmail.com}

\begin{abstract}
We give necessary and sufficient conditions for a hyperbolic set to be non-chaotic (or, conversely, chaotic) in a certain sense.
\end{abstract}

\maketitle

\markboth{NORIAKI KAWAGUCHI}{On non-chaotic hyperbolic sets}

\section{Introduction}

{\em Hyperbolic sets} are important objects in the study of differentiable dynamical systems (see \cite{KH} for background). In particular, their existence is closely tied to the chaotic behavior of such systems. Hyperbolic sets are typically surrounded by many hyperbolic periodic points and their transversal homoclinic points, where rich chaotic dynamics occur. In this paper, however, we focus on situations in which a hyperbolic set degenerates and fails to produce chaotic behavior in its neighborhood. Our aim is to characterize precisely those hyperbolic sets that are non-chaotic (or, conversely, chaotic) in a certain sense.  Note that this issue was studied by, e.g., Anosov \cite{A1}. In \cite{K}, the author gave several necessary and sufficient conditions for a locally maximal hyperbolic set to be degenerate and thus non-chaotic. This paper deals with a hyperbolic set that is not necessarily locally maximal.
 
We begin by defining chain recurrence and chain components. Throughout, $X$ denotes a compact metric space endowed with a metric
\[
d\colon X\times X\to[0,\infty).
\]
Let $f\colon X\to X$ be a continuous map. For $\delta>0$, a sequence $(x_i)_{i=0}^k$ of points in $X$, where $k\ge1$, is called a {\em $\delta$-chain} of $f$ if
\[
\sup_{0\le i\le k-1}d(f(x_i),x_{i+1})\le\delta.
\]
For $x,y\in X$, the notation $x\rightarrow y$ means that for any $\delta>0$, there is a $\delta$-chain $(x_i)_{i=0}^k$ of $f$ with $x_0=x$ and $x_k=y$. We say that $f$ is {\em chain transitive} if $x\rightarrow y$ for all $x,y\in X$. An $x\in X$ is called a {\em chain recurrent point} for $f$ if $x\rightarrow x$. We denote by $CR(f)$ the set of chain recurrent points for $f$. Note that $CR(f)$ is a closed subset of $X$ and satisfies $f(CR(f))=CR(f)$. A relation $\leftrightarrow$ in
\[
CR(f)^2=CR(f)\times CR(f)
\]
is defined by: for any $x,y\in X$, $x\leftrightarrow y$ if and only if $x\rightarrow y$ and $y\rightarrow x$. Note that $\leftrightarrow$ is a closed equivalence relation in $CR(f)^2$ and satisfies $x\leftrightarrow f(x)$ for all $x\in X$. An equivalence class of $\leftrightarrow$ is called a {\em chain component} for $f$. Let $\mathcal{C}(f)$ denote the set of chain components for $f$. Note that  
\begin{itemize}
\item $CR(f)=\bigsqcup_{C\in\mathcal{C}(f)}C$, a disjoint union,
\item every $C\in\mathcal{C}(f)$ is a closed subset of $CR(f)$ and satisfies $f(C)=C$,
\item $f|_C\colon C\to C$ is chain transitive for all $C\in\mathcal{C}(f)$.
\end{itemize}

{\em Expansiveness} is a topological feature of dynamics on hyperbolic sets. Its basic definition is as follows. 

\begin{defi}
\normalfont
A homeomorphism $f\colon X\to X$ is said to be {\em expansive} if there is $e>0$ such that
\[
\sup_{i\in\mathbb{Z}}d(f^i(x),f^i(y))\le e
\]
implies $x=y$ for all $x,y\in X$. In this case, $e>0$ is called an expansive constant for $f$.
\end{defi}

{\em Sensitivity} is a characteristic feature of chaotic dynamical systems. Let $f\colon X\to X$ be a continuous map and let $x\in X$. For $a>0$, $x$ is called an {\em $a$-sensitive point} for $f$ if for any $\delta>0$, there is $y\in X$ such that $d(x,y)\le\delta$ and
\[
d(f^i(x),f^i(y))>a
\]
for some $i\ge0$. Let $Sen_a(f)$ denote the set of $a$-sensitive points for $f$. We denote by $Sen(f)$ the set of {\em sensitive points} for $f$:
\[
Sen(f)=\bigcup_{a>0}Sen_a(f).
\] 
We say that $f$ is {\em equicontinuous} if for any $\epsilon>0$, there is $\delta>0$ such that $d(z,w)\le\delta$ implies
\[
\sup_{i\ge0}d(f^i(z),f^i(w))\le\epsilon
\]
for all $z,w\in X$. We easily see that $f$ is equicontinuous if and only if $Sen(f)=\emptyset$. It is not difficult to prove that if $f$ is surjective and equicontinuous, then $f$ is a homeomorphism and $f^{-1}$ is also equicontinuous.

Given a continuous map $f\colon X\to X$, if $Sen(f|_{CR(f)})\ne\emptyset$, then $CR(f)$ is obviously an infinite set. If $Sen(f|_{CR(f)})=\emptyset$, then $f|_{CR(f)}$ is an equicontinuous homeomorphism; therefore, whenever $f$ is an expansive homeomorphism, $Sen(f|_{CR(f)})=\emptyset$ implies that $CR(f)$ is a finite set. In other words, we obtain the following lemma.

\begin{lem}
For any expansive homeomorphism $f\colon X\to X$, the following conditions are equivalent
\begin{itemize}
\item $Sen(f|_{CR(f)})\ne\emptyset$,
\item $CR(f)$ is an infinite set.
\end{itemize}
\end{lem}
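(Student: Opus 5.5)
We prove the two implications separately, following the sketch given just before the statement.

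For the forward implication, suppose $Sen(f|_{CR(f)})\ne\emptyset$ and let $x\in Sen_a(f|_{CR(f)})$ for some $a>0$. If $CR(f)$ were finite, every point of $CR(f)$ would be isolated in $CR(f)$. Choosing $\delta>0$ smaller than the minimal distance between distinct points of $CR(f)$, any $y\in CR(f)$ with $d(x,y)\le\delta$ equals $x$. Then $d(f^i(x),f^i(y))=0\le a$ for all $i\ge0$, which contradicts sensitivity. Hence $CR(f)$ is infinite. Expansiveness is not needed here.

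For the converse, we argue by contraposition: assume $Sen(f|_{CR(f)})=\emptyset$ and show that $CR(f)$ is finite.

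First, $f|_{CR(f)}$ is equicontinuous, since $Sen=\emptyset$ is equivalent to equicontinuity. It is also surjective, because $f(CR(f))=CR(f)$. By the remark in the paper, $g=f|_{CR(f)}$ is then a homeomorphism of $CR(f)$ and $g^{-1}$ is equicontinuous. (In our setting $g$ is already a homeomorphism, being the restriction of the homeomorphism $f$ to an invariant compact set, but we need the equicontinuity of $g^{-1}$.)

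Let $e>0$ be an expansive constant for $f$. By equicontinuity of $g$ and of $g^{-1}$, there is $\delta>0$ such that for all $z,w\in CR(f)$ with $d(z,w)\le\delta$,
\[
\sup_{i\ge0}d(f^i(z),f^i(w))\le e\quad\text{and}\quad\sup_{i\ge0}d(f^{-i}(z),f^{-i}(w))\le e .
\]
Hence $\sup_{i\in\mathbb{Z}}d(f^i(z),f^i(w))\le e$, and expansiveness of $f$ on $X$ (restricted to points of $CR(f)$) gives $z=w$.

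It follows that distinct points of $CR(f)$ are at distance greater than $\delta$. Since $CR(f)$ is a closed subset of the compact space $X$, it is compact, and a compact $\delta$-separated set is finite: otherwise it would contain a convergent sequence of distinct points, contradicting separation.

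The only step requiring care is obtaining two-sided control from one-sided sensitivity data. This is handled by the equicontinuity of $g^{-1}$, which the paper notes follows from surjectivity plus equicontinuity. If a justification is wanted, it can be argued by a standard compactness argument: if $g^{-1}$ were not equicontinuous, there would be pairs $(z_n,w_n)$ with $d(z_n,w_n)\to0$ but $d(g^{-k_n}z_n,g^{-k_n}w_n)\ge\epsilon$. Passing to limits of the backward points and applying the forward equicontinuity of $g$ to those limits yields a contradiction.
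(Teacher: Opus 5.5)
Your proof is correct and follows the paper's own argument. The forward direction is the observation that a finite $CR(f)$ has only isolated points. The converse combines two-sided equicontinuity of $f|_{CR(f)}$ (via the same fact about surjective equicontinuous maps that the paper quotes) with expansiveness, which makes $CR(f)$ uniformly separated and hence finite.

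The one weak spot is your optional justification that $g^{-1}$ is equicontinuous. The limits $u,v$ of the backward points satisfy $d(u,v)\ge\epsilon$ and $d(g^{k_n}(u),g^{k_n}(v))\to0$. This contradicts injectivity only if the $k_n$ are bounded. When $k_n\to\infty$ it merely exhibits a nontrivial asymptotic pair, and ruling that out needs a recurrence input, e.g.\ Arzel\`a--Ascoli giving $g^{n_j}\to\mathrm{id}$ uniformly for some $n_j\to\infty$, or the fact that $g$ is an isometry for $d'(x,y)=\sup_{i\ge0}d(g^i(x),g^i(y))$.
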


A proof of the following lemma can be found in \cite{BS} (see \cite[Corollary 2.4.2]{BS}).

\begin{lemma}
Let $f\colon X\to X$ be an expansive homeomorphism. If $X$ is an infinite set, then there are $x,y\in X$ such that $x\ne y$ and
\[
\lim_{i\to\infty}d(f^i(x),f^i(y))=0.
\]
\end{lemma}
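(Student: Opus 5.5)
We argue by contradiction. Fix an expansive constant $e>0$ for $f$, and suppose $X$ is infinite but that no two distinct points are forward asymptotic. The plan is to find a pair of distinct points whose entire forward orbits stay $e$-close; expansiveness will then force a contradiction.

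\emph{Step 1: pairs of distinct points whose full orbits are close for a long time.} Fix $\epsilon\in(0,e)$. For each $n\ge1$ we want distinct points $x_n,y_n$ with $d(f^i(x_n),f^i(y_n))\le\epsilon$ for all $|i|\le n$. Since $X$ is infinite and compact, it has an accumulation point, so there are distinct points $u_n,v_n$ arbitrarily close to each other. The maps $f^i$ with $|i|\le n$ are finitely many and uniformly continuous. Hence if $d(u_n,v_n)$ is small enough, all the distances $d(f^i(u_n),f^i(v_n))$ with $|i|\le n$ are at most $\epsilon$. We then take $x_n=u_n$ and $y_n=v_n$.

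\emph{Step 2: shift each pair to the moment it separates.} Expansiveness gives a first $m\ge0$ with $d(f^m(x_n),f^m(y_n))>\epsilon$, or the analogous first time in negative time. Step 1 forces $|m|>n$. If $m$ occurs in forward time, set $p_n=f^{m}(x_n)$ and $q_n=f^{m}(y_n)$. Then $d(p_n,q_n)>\epsilon$, while $d(f^{-j}(p_n),f^{-j}(q_n))\le\epsilon$ for $0<j\le m+n$. If instead the separation happens in backward time, we get the mirror statement, with the points $\epsilon$-close for $n$ steps of forward time. Passing to a subsequence, one of the two cases occurs infinitely often.

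\emph{Step 3: take a limit.} By compactness, pass to a further subsequence along which $p_n\to p$ and $q_n\to q$. The limit satisfies $d(p,q)\ge\epsilon$, so $p\ne q$. In the backward case, continuity gives $d(f^{-j}(p),f^{-j}(q))\le\epsilon$ for all $j\ge0$. In the forward case, it gives $d(f^{j}(p),f^{j}(q))\le\epsilon$ for all $j\ge0$ in the same way.

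\emph{Step 4: closeness for all time gives asymptoticity.} The key fact we need is this consequence of expansiveness. There is a function $N(\eta)$ such that $d(f^i(a),f^i(b))\le\epsilon$ for all $|i|\le N(\eta)$ implies $d(a,b)<\eta$. This follows by a routine compactness argument, using $\epsilon<e$. Now apply it to the pair from the forward case. For $j\ge N(\eta)$, the points $a=f^j(p)$ and $b=f^j(q)$ satisfy the hypothesis, so $d(f^j(p),f^j(q))<\eta$. Hence $d(f^j(p),f^j(q))\to0$, and $p,q$ are distinct points that are forward asymptotic.

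\emph{The main obstacle is the backward case.} There the same reasoning only produces distinct \emph{negatively} asymptotic points. The standard remedy is to show that both cases must occur, and in the forward case we already get the claim. The plan is as follows. Suppose that for every $\epsilon\in(0,e)$ and every pair of distinct points, the first exit happens in backward time whenever the points are close enough (taking $n$ large). Then $f$ is uniformly expanding in forward time on small scales. In other words, $f^{-1}$ contracts small balls uniformly, and $f^{-N}$ maps each $\epsilon$-ball into an $\epsilon/2$-ball. Cover $X$ by finitely many $\epsilon$-balls. Iterating $f^{-N}$ shows that $X=f^{-kN}(X)$ is covered by the same finite number of balls of radius $\epsilon/2^k$ for every $k$. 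Letting $k\to\infty$ then forces $X$ to be finite, which is a contradiction.

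To implement this, we would need the uniform version of Step 1. This is where the care lies: for all close pairs we must bound the backward time by something like $N(\epsilon/2)$.
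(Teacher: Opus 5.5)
Your Steps 1--4 are sound (up to the slip that in Step 3 the closeness holds for $j\ge1$ rather than $j\ge0$). However, they only settle the easy case, in which the separation in Step 2 occurs in negative time for infinitely many $n$. The remaining case is the one where close pairs separate only in forward time, so that your limits are merely negatively asymptotic. That case is the real content of the lemma: it is essentially the classical fact that a positively expansive homeomorphism of a compact metric space forces the space to be finite. The paper gives no proof and defers to Brin--Stuck. You leave this case as an unimplemented plan, and the plan has three concrete defects.

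\begin{enumerate}
\item Your labels swap between Step 2 (direction of separation) and Step 3 (direction of closeness). As a result, the supposition ``the first exit happens in backward time'' is the wrong one. The bad case is that close pairs exit in forward time and stay close in backward time.
\item You never derive, from the absence of forward asymptotic pairs, the uniform statement you need about \emph{all} close pairs.
\item The iteration ``$f^{-N}$ maps $\epsilon$-balls into $\epsilon/2$-balls, hence $f^{-kN}$ maps them into $\epsilon/2^k$-balls'' does not follow. What one can actually obtain sends $\delta$-small sets to $\eta$-small sets with $N=N(\eta)$ depending on $\eta$. That is not a scale-invariant contraction with a fixed $N$.
\end{enumerate}

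Here is how to close the gap. Assume no two distinct points are forward asymptotic.

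First, there is $\delta>0$ such that $d(x,y)<\delta$ implies $d(f^{-i}(x),f^{-i}(y))\le\epsilon$ for all $i\ge0$. Otherwise, pick $x_k,y_k$ with $d(x_k,y_k)\to0$ and let $m_k\ge0$ be least with $d(f^{-m_k}(x_k),f^{-m_k}(y_k))>\epsilon$. Uniform continuity forces $m_k\to\infty$. A limit $(p,q)$ of $(f^{-m_k}(x_k),f^{-m_k}(y_k))$ then satisfies $d(p,q)\ge\epsilon$ and $d(f^j(p),f^j(q))\le\epsilon$ for all $j\ge1$. So $p,q$ are distinct and, by your Step 4, forward asymptotic, a contradiction.

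Next, fix $\eta>0$ and take $N=N(\eta)$ from Step 4. Suppose $d(x,y)<\delta$ and set $a=f^{-N}(x)$, $b=f^{-N}(y)$. Then $d(f^i(a),f^i(b))\le\epsilon$ for $|i|\le N$, since $f^i(a)=f^{i-N}(x)$ with $-2N\le i-N\le0$. Hence $d(f^{-N}(x),f^{-N}(y))<\eta$.

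Finally, cover $X$ by $K$ sets of diameter less than $\delta$. Since $X=f^{-N}(X)$, $X$ is covered by $K$ sets of diameter at most $\eta$. This holds for every $\eta>0$, so $X$ has at most $K$ points, contradicting that $X$ is infinite.

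No iteration with a fixed $N$ is needed, and in fact your Steps 1--3 become superfluous.
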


Given a continuous $f\colon X\to X$, we denote by $Per(f)$ the set of {\em periodic points} for $f$:
\[
Per(f)=\bigcup_{j\ge1}\{x\in X\colon f^j(x)=x\}.
\]
By using the above lemma, we shall prove the following.

\begin{lem}
For any expansive homeomorphism $f\colon X\to X$, the following conditions are equivalent
\begin{itemize}
\item[(1)] $CR(f)$ is a finite set,
\item[(2)] every $C\in\mathcal{C}(f)$ is a finite set,
\item[(3)] $CR(f)=Per(f)$.
\end{itemize}
\end{lem}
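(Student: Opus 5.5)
The plan is to prove the cycle of implications $(1)\Rightarrow(2)\Rightarrow(3)\Rightarrow(1)$. The implication $(1)\Rightarrow(2)$ is immediate, since every $C\in\mathcal{C}(f)$ is a subset of $CR(f)=\bigsqcup_{C\in\mathcal{C}(f)}C$.

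For $(2)\Rightarrow(3)$, I will first record that $Per(f)\subset CR(f)$ always holds. If $f^j(x)=x$, then $(x,f(x),\dots,f^j(x))$ is a $\delta$-chain from $x$ to $x$ for every $\delta>0$, so $x\rightarrow x$. For the reverse inclusion, take $x\in CR(f)$ and let $C\in\mathcal{C}(f)$ be the chain component containing $x$. By assumption $C$ is finite, and $f(C)=C$. Since $f$ is injective, $f|_C$ is a bijection of the finite set $C$, that is, a permutation. Hence $f^{m}|_C=\mathrm{id}_C$ for $m=|C|!$. In particular $f^m(x)=x$, so $x\in Per(f)$.

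For $(3)\Rightarrow(1)$, which is the only step with real content, I argue by contradiction and assume $CR(f)=Per(f)$ is infinite. Since $CR(f)$ is closed, it is a compact metric space. Since $f(CR(f))=CR(f)$ and $f$ is injective, $f|_{CR(f)}\colon CR(f)\to CR(f)$ is a homeomorphism. It is expansive, because any expansive constant $e$ for $f$ serves for the restriction. The Lemma quoted from \cite{BS} therefore applies to $f|_{CR(f)}$. It yields $x,y\in CR(f)$ with $x\ne y$ and $d(f^i(x),f^i(y))\to0$ as $i\to\infty$.

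By (3), $x$ and $y$ are periodic, say $f^p(x)=x$ and $f^q(y)=y$. Put $n=pq$. Then $f^{kn}(x)=x$ and $f^{kn}(y)=y$ for all $k\ge1$. This gives
\[
d(x,y)=\lim_{k\to\infty}d(f^{kn}(x),f^{kn}(y))=0,
\]
which contradicts $x\ne y$. Hence $CR(f)$ is finite, and the cycle of implications is complete. The main point to check carefully is that the restriction to $CR(f)$ is indeed an expansive homeomorphism of an infinite compact space, so that the quoted Lemma applies; once that is in place, the remaining steps are routine.
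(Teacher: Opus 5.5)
Your proposal is correct and follows the paper's proof: $(1)\Rightarrow(2)\Rightarrow(3)$ are the trivial implications, and for $(3)\Rightarrow(1)$ you apply the quoted asymptotic-pair lemma to the expansive homeomorphism $f|_{CR(f)}$, obtaining distinct asymptotic points that cannot both be periodic. The only difference is that you write out details the paper leaves implicit: $Per(f)\subset CR(f)$, the permutation argument on a finite chain component, and the limit along multiples of $pq$.
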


\begin{proof}
The implications (1)$\implies$(2)$\implies$(3) hold for any continuous map $f\colon X\to X$. If $CR(f)$ is an infinite set, then as $f$ is an expansive homeomorphism and so is $f|_{CR(f)}$, by the above lemma, we have
\[
\lim_{i\to\infty}d(f^i(x),f^i(y))=0
\]
for some $x,y\in CR(f)$ with $x\ne y$. It follows that $x\not\in Per(f)$ or $y\notin Per(f)$; therefore, $CR(f)\setminus Per(f)\ne\emptyset$. This proves the implication (3)$\implies$(1) and thus the lemma has been proved.
\end{proof}

Combining Lemmas 1.1 and 1.2, we obtain the following theorem.

\begin{thm}
For any expansive homeomorphism $f\colon X\to X$, the following conditions are equivalent
\begin{itemize}
\item[(0)] $Sen(f|_{CR(f)})=\emptyset$,
\item[(1)] $CR(f)$ is a finite set,
\item[(2)] every $C\in\mathcal{C}(f)$ is a finite set,
\item[(3)] $CR(f)=Per(f)$.
\end{itemize}
\end{thm}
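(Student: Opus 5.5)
The plan is to obtain Theorem 1.1 by splicing the two lemmas together; almost nothing new is needed. Lemma 1.2 already shows that (1), (2) and (3) are equivalent for any expansive homeomorphism $f\colon X\to X$. So it remains to connect condition (0) with one of these, and the natural partner is (1). That link is exactly Lemma 1.1, read contrapositively: $Sen(f|_{CR(f)})\ne\emptyset$ if and only if $CR(f)$ is infinite. Equivalently, $Sen(f|_{CR(f)})=\emptyset$ if and only if $CR(f)$ is finite, which is (0)$\iff$(1).

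To be careful, I would recheck the two directions of Lemma 1.1, since the text only sketches them.

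\emph{From (0) to (1).} Suppose (0) holds, so $f|_{CR(f)}$ is equicontinuous. Since $f(CR(f))=CR(f)$, the restriction is a surjective equicontinuous map. By the remark in the introduction, it is then a homeomorphism whose inverse is also equicontinuous.
\begin{itemize}
\item Let $e>0$ be an expansive constant for $f$; it is also one for $f|_{CR(f)}$.
\item Choose $\delta>0$ so that $d(z,w)\le\delta$ implies $\sup_{i\ge0}d(f^i(z),f^i(w))\le e$, and likewise for $f^{-1}$.
\item Then any two distinct points of $CR(f)$ are more than $\delta$ apart.
\item A compact set that is $\delta$-separated is finite.
\end{itemize}

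\emph{From (1) to (0).} Suppose $CR(f)$ is finite. Then it is a discrete space, and any point $y$ with $d(x,y)$ smaller than the minimal distance between points of $CR(f)$ must equal $x$. Hence no point of $CR(f)$ is sensitive, and (0) holds.

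Combining (0)$\iff$(1) with Lemma 1.2 gives the full equivalence (0)$\iff$(1)$\iff$(2)$\iff$(3).

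The only step needing any care is the equicontinuity-of-the-inverse fact used in the direction from (0) to (1), which lets us control both positive and negative iterates simultaneously. Since the paper states that fact as known, I would simply invoke it. The real content, that an infinite expansive system has a pair of distinct points whose forward orbits are asymptotic, is already absorbed into Lemma 1.2 through the cited result from \cite{BS}.
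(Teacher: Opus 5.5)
Your proposal is correct and matches the paper's proof: Theorem 1.1 follows by combining Lemma 1.1, which gives (0)$\iff$(1), with Lemma 1.2, which gives (1)$\iff$(2)$\iff$(3). Your check of Lemma 1.1 is the argument the paper sketches, written out in full. Equicontinuity of $f|_{CR(f)}$ and of its inverse, together with the expansive constant, forces distinct points of $CR(f)$ to be $\delta$-separated, and a finite $CR(f)$ trivially has no sensitive points.
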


Let $f\colon X\to X$ be a homeomorphism. For $x\in X$, we define two subsets $W^u(x)$, $W^s(x)$ of $X$ by
\[
W^u(x)=\{y\in X\colon\lim_{i\to-\infty}d(f^i(x),f^i(y))=0\},
\]
\[
W^s(x)=\{y\in X\colon\lim_{i\to+\infty}d(f^i(x),f^i(y))=0\}.
\]

\begin{rem}
\normalfont
For an expansive homeomorphism $f\colon X\to X$, if $CR(f)$ is a finite set, then $X$ is a countable set. This can be seen as follows. If $CR(f)$ is a finite set, then, $CR(f)=Per(f)$. It follows that
\[
X=\bigcup_{p,q\in Per(f)}\left[W^u(p)\cap W^s(q)\right].
\]
Since $f$ is expansive, $W^u(p)\cap W^s(q)$ is a  countable set for all $p,q\in Per(f)$; therefore, $X$ is a countable set.
\end{rem}

{\em Shadowing} is another topological feature of dynamics on hyperbolic sets. Let us define shadowing on a subset.

\begin{defi}
\normalfont
Let $f\colon X\to X$ be a homeomorphism and let $\xi=(x_i)_{i\in\mathbb{Z}}$ be a sequence of points in $X$.
\begin{itemize}
\item For $\delta>0$, $\xi$ is called a {\em $\delta$-pseudo orbit} of $f$ if
\[
\sup_{i\in\mathbb{Z}}d(f(x_i),x_{i+1})\le\delta.
\]
\item For $x\in X$ and $\epsilon>0$, $\xi$ is said to be {\em $\epsilon$-shadowed} by $x$ if
\[
\sup_{i\in\mathbb{Z}}d(x_i,f^i(x))\le\epsilon.
\]
\end{itemize}
Given a subset $S$ of $X$, we say that $f$ has {\em shadowing on $S$} if for any $\epsilon>0$, there is $\delta>0$ such that every $\delta$-pseudo orbit $(x_i)_{i\in\mathbb{Z}}$ of $f$ with $x_i\in S$ for all $i\in\mathbb{Z}$ is $\epsilon$-shadowed by some $x\in X$. We say that $f$ has {\em shadowing} if $f$ has shadowing on $X$.
\end{defi}

Similarly, we define expansiveness on a subset. 

\begin{defi}
\normalfont
Let $f\colon X\to X$ be a homeomorphism. For a subset $S$ of $X$, we say that $f$ is {\em expansive on $S$} if there is $e>0$ such that
\[
\sup_{i\in\mathbb{Z}}d(f^i(x),f^i(y))\le e
\]
implies $x=y$ for all $x,y\in\bigcap_{i\in\mathbb{Z}}f^i(S)$. In this case, $e>0$ is called an expansive constant for $f$ on $S$.
\end{defi}

Next, we recall the definition of topological entropy. Note that positive topological entropy is another characteristic feature of chaotic dynamical systems.

\begin{defi}
\normalfont
Let $f\colon X\to X$ be a continuous map and let $K$ be a subset of $X$. For $n\ge1$ and $r>0$, a subset $E$ of $K$ is said to be {\em $(n,r)$-separated} if $x\ne y$ implies
\[
\max_{0\le i\le n-1}d(f^i(x),f^i(y))>r
\]
for all $x,y\in E$. Let $s_n(f,K,r)$ denote the largest cardinality of an $(n,r)$-separated subset of $K$. We define $h(f,K,r)$ and $h(f,K)$ by
\[
h(f,K,r)=\limsup_{n\to\infty}\frac{1}{n}\log{s_n(f,K,r)}
\]
and $h(f,K)=\lim_{r\to0}h(f,K,r)$. The topological entropy $h_{\rm top}(f)$ of $f$ is defined by $h_{\rm top}(f)=h(f,X)$ (see, e.g., \cite{W} for more details).
\end{defi}

For $x\in X$ and a subset $S$ of $X$, let $d(x,S)$ denote the distance of $x$ from $S$:
\[
d(x,S)=\inf_{y\in S}d(x,y).
\]
For a subset $S$ of $X$ and $r>0$. we denote by $B_r(S)$ the closed $r$-neighborhood of $S$:
\[
B_r(S)=\{y\in X\colon d(x,S)\le r\}.
\]
For a homeomorphism $f\colon X\to X$, a closed subset $S$ of $X$ with $f(S)=S$ is said to be {\em locally maximal} if there is $r>0$ such that $S=\bigcap_{i\in\mathbb{Z}}f^i(B_r(S))$.

The main result of this paper is the following theorem. Through the shadowing lemma, the result applies directly to hyperbolic sets. 

\begin{thm}
Given a homeomorphism $f\colon X\to X$ and a closed subset $\Lambda$ of $X$ with $f(\Lambda)=\Lambda$, if
\begin{itemize}
\item $f$ has shadowing on $\Lambda$,
\item $f$ is expansive on $B_b(\Lambda)$ for some $b>0$,
\end{itemize}
then the following conditions are equivalent
\begin{itemize}
\item[(1)] $Sen(f|_{CR(f|_\Lambda)})=\emptyset$,
\item[(2)] $h_{\rm top}(f|_{\Lambda_\epsilon})=0$, where $\Lambda_\epsilon=\bigcap_{i\in\mathbb{Z}}f^i(B_\epsilon(\Lambda))$, for some $\epsilon>0$,
\item[(3)] for every $c>0$, there is a closed subset $\Gamma_c$ of $X$ with $f(\Gamma_c)=\Gamma_c$ such that
\begin{itemize}
\item $\Gamma_c$ is locally maximal,
\item $\Lambda\subset\Gamma_c\subset B_c(\Lambda)$ and $h_{\rm top}(f|_{\Gamma_c})=0$.
\end{itemize}
\end{itemize}
\end{thm}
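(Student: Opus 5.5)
The plan is to reduce everything to the structure of $CR(f|_\Lambda)$. Since $\Lambda\subset B_b(\Lambda)$ and $f(\Lambda)=\Lambda$, $f|_\Lambda$ is an expansive homeomorphism. By Theorem 1.1 applied to $f|_\Lambda$, condition (1) is then equivalent to $CR(f|_\Lambda)$ being finite, and hence to $CR(f|_\Lambda)=Per(f|_\Lambda)$. Throughout, fix an expansive constant $e\le b$ for $f$ on $B_b(\Lambda)$. I would prove the cycle (3)$\implies$(2)$\implies$(1)$\implies$(3).

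\emph{(3)$\implies$(2).} Take any $c>0$ and $\Gamma=\Gamma_c$, with $r>0$ such that $\Gamma=\bigcap_{i\in\mathbb{Z}}f^i(B_r(\Gamma))$. Because $\Lambda\subset\Gamma$, we have $B_r(\Lambda)\subset B_r(\Gamma)$. Hence $\Lambda_r\subset\bigcap_i f^i(B_r(\Gamma))=\Gamma$, and so $h_{\rm top}(f|_{\Lambda_r})\le h_{\rm top}(f|_\Gamma)=0$. Thus (2) holds with $\epsilon=r$.

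\emph{(2)$\implies$(1).} Suppose $CR(f|_\Lambda)$ is infinite; I will show $h_{\rm top}(f|_{\Lambda_\epsilon})>0$ for every $\epsilon>0$. By Lemma 1.2, some chain component $C$ of $f|_\Lambda$ is infinite. Fix $\epsilon<e/4$ and let $\delta$ be the shadowing constant for $\epsilon$.
\begin{itemize}
\item Since $C$ is infinite and $f|_C$ is chain transitive, pick a point $p\in C$ and a point $q\in C$ whose orbit stays at distance $>3\epsilon$ from $p$ at suitable times.
\item Build two $\delta$-chains in $C$ from $p$ back to $p$ of a common length $N$: one obtained by iterating a single loop, the other by inserting a detour through $q$. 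Arrange, by adjusting lengths and concatenating powers of loops, that the two chains differ by more than $2\epsilon$ at some index.
\item Every bi-infinite concatenation of these two blocks is a $\delta$-pseudo orbit in $\Lambda$. It is $\epsilon$-shadowed by a point whose whole orbit lies in $B_\epsilon(\Lambda)$, i.e. a point of $\Lambda_\epsilon$.
\item For $k$ blocks this gives $2^k$ shadowing points that are pairwise $(kN,\epsilon)$-separated, since their pseudo-orbits differ by more than $2\epsilon$ at some time in $[0,kN)$.
\end{itemize}
Hence $h_{\rm top}(f|_{\Lambda_\epsilon})\ge\frac1N\log2>0$, contradicting (2). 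I expect the main obstacle to be producing two genuinely distinguishable loops of equal length. The route I would try first is to use that $C$ is not a single periodic orbit, with expansiveness and the lemma from \cite{BS} supplying distinct asymptotic points in $C$ to separate the loops.

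\emph{(1)$\implies$(3).} Now $CR(f|_\Lambda)=P$ is a finite union of periodic orbits. The candidate is $\Gamma_c=\Lambda_\eta$ for a suitable small $\eta\le c$; this set is closed, invariant, and satisfies $\Lambda\subset\Lambda_\eta\subset B_c(\Lambda)$.

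\emph{Zero entropy.} First show that for small $\eta$ one has $CR(f|_{\Lambda_\eta})\subset P$. A chain recurrent point of $\Lambda_\eta$ gives small pseudo-cycles that stay $\eta$-close to $\Lambda$. Replacing each point by a nearest point of $\Lambda$ yields pseudo-cycles in $\Lambda$, whose limits are chain recurrent for $f|_\Lambda$, hence lie in $P$. Expansiveness within $B_b(\Lambda)$ then forces the original point to coincide with the corresponding periodic orbit. Entropy is carried by the non-wandering set, which is contained in $CR$. Therefore $h_{\rm top}(f|_{\Lambda_\eta})=h_{\rm top}(f|_P)=0$.

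\emph{Local maximality.} I would show that there is $\eta_0$ such that for all $0<\eta'\le\eta_0$ we have $\Lambda_{\eta_0}\subset\Lambda_{\eta'}$, so these sets coincide. Then $\Gamma=\Lambda_{\eta'}$ satisfies $\bigcap_i f^i(B_r(\Gamma))\subset\Lambda_{\eta'+r}=\Gamma$ for $r$ small. The argument: each $z\in\Lambda_{\eta_0}$ has $\alpha$- and $\omega$-limit sets in $P$. Its orbit is therefore, up to finitely many times, close to periodic orbits of $\Lambda$, and on the finite middle segment it is traced by a $\delta$-pseudo orbit in $\Lambda$. Shadowing produces a point $\epsilon'$-close to this pseudo-orbit, and expansiveness identifies that point with $z$. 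This places the orbit of $z$ inside $B_{\eta'}(\Lambda)$. This uniform control is the second delicate step, and I would handle it via the uniform expansiveness estimate: orbits that are $e$-close on long time windows are $\eta'$-close in the middle.
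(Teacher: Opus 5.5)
\textbf{Main gap: local maximality in (1)$\implies$(3).} Your (3)$\implies$(2) is correct. Your zero-entropy step is also correct once the compactness argument as $\eta\to0$ is made explicit: for small $\eta$ you get $CR(f|_{\Lambda_\eta})=CR(f|_\Lambda)$, using upper semicontinuity and then expansiveness near the finitely many periodic orbits. This is a more direct proof of (1)$\implies$(2) than the paper's Hausdorff-limit argument in Lemma 2.5.

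The genuine gap is your claim that $\Lambda_\eta$ is locally maximal. If $\Lambda_{\eta_0}=\Lambda_{\eta'}$ for all $0<\eta'\le\eta_0$, then $\Lambda_{\eta_0}=\bigcap_{\eta'>0}\Lambda_{\eta'}=\Lambda$. So you are really asserting that $\Lambda$ itself is locally maximal, and (1) does not imply this. For a counterexample, take $f=\sigma$ on $\{0,1,2\}^{\mathbb{Z}}$, which is expansive with shadowing.
\begin{itemize}
\item Let $u_i=0$ for $i<0$ and $u_i=1$ for $i\ge0$; let $v_i=1$ for $i<0$ and $v_i=2$ for $i\ge0$.
\item Let $\Lambda$ be the three fixed points together with the orbits of $u$ and $v$. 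Then $CR(f|_\Lambda)$ is the set of fixed points, so (1) holds.
\item Let $\zeta^{(k)}$ equal $0$ on $i<0$, $1$ on $0\le i<k$, and $2$ on $i\ge k$. For every $\eta>0$, once $k$ is large, $\zeta^{(k)}\in\Lambda_\eta\setminus\Lambda$.
\end{itemize}
Your mechanism breaks for the matching reason. Projecting the orbit of $z\in\Lambda_{\eta_0}$ to $\Lambda$ gives a pseudo-orbit whose jumps are of order $\eta_0$, not $\delta(\eta')$, so shadowing gives no $\eta'$-control.

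The missing idea is to thicken $\Lambda$ to a set of finite type instead of using $\Lambda_\eta$. This is how the paper argues.
\begin{itemize}
\item Under (1), $\Lambda=\bigcup_{p,q\in Per(f|_\Lambda)}W^u(p)\cap W^s(q)$ is countable (Remark 1.1), hence totally disconnected.
\item Code $\Lambda$ by a clopen partition into pieces of diameter $<e$, and let $\Xi$ be the subshift of finite type of admissible $(2n+1)$-words.
\item For large $n$, Lemma 2.6 makes every $s\in\Xi$ a $\delta$-pseudo orbit in $\Lambda$. Shadowing and expansiveness then give a conjugacy from $\Xi$ onto some $\Gamma_c$ with $\Lambda\subset\Gamma_c\subset B_c(\Lambda)$ (Lemma 2.7).
\item Hence $f|_{\Gamma_c}$ has shadowing, and $\Gamma_c$ is locally maximal by Lemma 2.3.
\item Since $\Gamma_c\subset\Lambda_\eta$ for $c<\eta$, your entropy bound gives $h_{\rm top}(f|_{\Gamma_c})=0$.
\end{itemize}

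\textbf{Secondary gap: the loop construction in (2)$\implies$(1).} The step you flag as the main obstacle is genuinely not done. Suppose you take a repeated loop at $p$ and a loop with a detour through $q$, and equalize lengths by taking powers. The resulting blocks need not differ by more than $2\epsilon$ at any common index, since the first loop may be near $q$ exactly when the second is.

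The remedy is not to trade the failure of (1) for ``$CR(f|_\Lambda)$ is infinite''. The failure of (1) directly gives $p\in Sen_a(f|_S)$ with $S=CR(f|_\Lambda)$. Hence there are $x,y\in S$ arbitrarily close to $p$ with $d(f^k(x),f^k(y))>a$. Because $S=CR(f|_S)$, there are fine loops at $x$ and at $y$ of a common length $m>k$. Replacing their endpoints by $p$ gives two blocks based at $p$. When $\epsilon<a/2$, these blocks differ by more than $2\epsilon$ at the same index $k$. This is Lemma 2.1.

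Your asymptotic-pair idea can be made to work the same way, by placing the separating orbit segment at the same position in both blocks (say, at the end). That alignment is exactly what is missing from your sketch.

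Finally, a gap of more than $2\epsilon$ between pseudo-orbits only separates the shadowing points by some fixed positive amount, not by $\epsilon$. This is harmless for entropy, but $(kN,\epsilon)$-separation as you state it would need a $3\epsilon$ gap.
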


Let $M$ be a closed Riemannian manifold and let $f\colon M\to M$ be a $C^1$-diffeomorphism. A closed subset $\Lambda$ of $M$ with $f(\Lambda)=\Lambda$ is called a {\em hyperbolic set} for $f$ if there are $C>0$, $0<\lambda<1$, and a $Df$-invariant splitting
\[
T_\Lambda M=E^u\oplus E^s
\]
such that for any $x\in\Lambda$,
\begin{itemize}
\item $||Df_x^n(v)||\le C\lambda^n||v||$ for all $v\in E_x^s$ and $n\ge0$,
\item $||Df_x^{-n}(v)||\le C\lambda^n||v||$ for all $v\in E_x^u$ and $n\ge0$.
\end{itemize}

\begin{rem}
\normalfont
We know that if $\Lambda$ is a hyperbolic set for $f$, then $\bigcap_{i\in\mathbb{Z}}f^i(B_b(\Lambda))$ is a hyperbolic set for $f$ for some $b>0$ (see \cite[Proposition 6.4.6]{KH}).
\end{rem}

The following is a consequence of the so-called shadowing lemma  (see, e.g., Theorem 18.1.2 and also Corollary 6.4.10 of \cite{KH}). 

\begin{lem}
Let $M$ be a closed Riemannian manifold and let
\[
f\colon M\to M
\]
be a $C^1$-diffeomorphism.  Given any hyperbolic set $\Lambda$ for $f$, $f$ is expansive and has shadowing on $B_b(\Lambda)$ for some $b>0$.
\end{lem}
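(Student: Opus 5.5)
The plan is to derive both properties from standard results on hyperbolic sets in \cite{KH}, choosing a single $b>0$ small enough to serve both. Hyperbolicity carries over only to sufficiently small neighbourhoods of $\Lambda$, so I would fix the neighbourhood once from the shadowing lemma and then shrink $b$ as needed.

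\emph{Shadowing.} The shadowing theorem \cite[Theorem 18.1.2]{KH} applies to a hyperbolic set $\Lambda$ of a $C^1$-diffeomorphism. It yields an open neighbourhood $U$ of $\Lambda$ with the following property: for every $\epsilon>0$ there is $\delta>0$ such that every $\delta$-pseudo orbit $(x_i)_{i\in\mathbb{Z}}$ with $x_i\in U$ for all $i$ is $\epsilon$-shadowed by a true orbit. Behind this, the hyperbolic splitting extends to continuous invariant cone families on $U$, and the shadowing orbit is found by a fixed point or graph-transform argument. Since $\Lambda$ is compact, there is $b_1>0$ with $B_{b_1}(\Lambda)\subset U$. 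Hence $f$ has shadowing on $B_b(\Lambda)$ in the sense of Definition 1.2 for every $0<b\le b_1$, with the shadowing point taken in $X=M$.

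\emph{Expansiveness.} By Remark 1.2 there is $b_2>0$ such that $\Lambda'=\bigcap_{i\in\mathbb{Z}}f^i(B_{b_2}(\Lambda))$ is a hyperbolic set for $f$. A hyperbolic set is expansive, with a uniform expansive constant $e>0$, by \cite[Corollary 6.4.10]{KH}. Two orbits in $\Lambda'$ that stay $e$-close for all time lie in the intersection of local stable and local unstable manifolds, which are transverse and meet in a single point. Now let $0<b\le\min\{b_1,b_2\}$. Then
\[
\bigcap_{i\in\mathbb{Z}}f^i(B_b(\Lambda))\subset\Lambda'.
\]
So if $x,y$ lie in the left-hand set and $\sup_{i\in\mathbb{Z}}d(f^i(x),f^i(y))\le e$, then $x=y$. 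Thus $f$ is expansive on $B_b(\Lambda)$ in the sense of Definition 1.3.

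The only point requiring care is that the cited results are phrased for the hyperbolic set itself or for pseudo orbits in some neighbourhood, whereas the definitions here concern closed balls $B_b(\Lambda)$ and the maximal invariant subset $\bigcap_{i\in\mathbb{Z}}f^i(B_b(\Lambda))$. I expect the main obstacle to be checking that \cite[Theorem 18.1.2]{KH} really gives shadowing for pseudo orbits lying only in a neighbourhood of $\Lambda$, not necessarily in $\Lambda$. If the statement there were only for pseudo orbits in $\Lambda$, I would first apply it to the hyperbolic set $\Lambda'$ from Remark 1.2. I would then use the cone-field extension of hyperbolicity to a neighbourhood of $\Lambda'$ to cover pseudo orbits in $B_b(\Lambda)$ for $b$ small.
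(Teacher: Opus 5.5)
Your proposal is correct and follows essentially the same route as the paper, which gives no separate argument and simply cites the shadowing theorem \cite[Theorem 18.1.2]{KH} together with \cite[Corollary 6.4.10]{KH}; Remark 1.2 (i.e.\ \cite[Proposition 6.4.6]{KH}) supplies the passage to $\bigcap_{i\in\mathbb{Z}}f^i(B_b(\Lambda))$ that you make explicit by shrinking $b$. The obstacle you anticipate does not arise, since \cite[Theorem 18.1.2]{KH} is already formulated for pseudo orbits lying in a neighbourhood of $\Lambda$, so your cone-field fallback is not needed.
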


As a corollary of Theorem 1.2 and Lemma 1.3, we obtain the following theorem, which characterizes non-chaotic (or, conversely, chaotic) hyperbolic sets. 

\begin{thm}
Let $M$ be a closed Riemannian manifold and let
\[
f\colon M\to M
\]
be a $C^1$-diffeomorphism. For a hyperbolic set  $\Lambda$ for $f$, the following conditions are equivalent
\begin{itemize}
\item[(1)] $Sen(f|_{CR(f|_\Lambda)})=\emptyset$,
\item[(2)] $h_{\rm top}(f|_{\Lambda_\epsilon})=0$, where $\Lambda_\epsilon=\bigcap_{i\in\mathbb{Z}}f^i(B_\epsilon(\Lambda))$, for some $\epsilon>0$,
\item[(3)] for every $c>0$, there is a locally maximal hyperbolic set $\Gamma_c$ for $f$ such that $\Lambda\subset\Gamma_c\subset B_c(\Lambda)$ and $h_{\rm top}(f|_{\Gamma_c})=0$.
\end{itemize}
\end{thm}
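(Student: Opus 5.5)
Theorem 1.3 follows from Theorem 1.2 once its two hypotheses are verified, and once the locally maximal sets $\Gamma_c$ produced in Theorem 1.2(3) are shown to be hyperbolic. By Lemma 1.3, there is $b>0$ such that $f$ is expansive on $B_b(\Lambda)$ and has shadowing on $B_b(\Lambda)$. Shadowing on $\Lambda$ follows, since the definition only restricts which pseudo orbits must be shadowed, and $\Lambda\subset B_b(\Lambda)$. Therefore Theorem 1.2 applies to $\Lambda$. Conditions (1) and (2) of Theorem 1.3 coincide verbatim with (1) and (2) of Theorem 1.2, so they are equivalent to condition (3) of Theorem 1.2.

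To compare the two versions of (3), I use Remark 1.2, and if necessary shrink $b$. Then $\Lambda_b:=\bigcap_{i\in\mathbb{Z}}f^i(B_b(\Lambda))$ is a hyperbolic set for $f$. Moreover, every closed $f$-invariant set $\Gamma\subset B_c(\Lambda)$ with $c\le b$ satisfies $\Gamma=\bigcap_i f^i(\Gamma)\subset\Lambda_b$. A closed invariant subset of a hyperbolic set is hyperbolic, because we can restrict the splitting and keep the same constants $C,\lambda$.

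For (3) of Theorem 1.3 $\implies$ (3) of Theorem 1.2: a locally maximal hyperbolic set is in particular a closed $f$-invariant locally maximal set, so this direction is immediate.

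For the converse, fix $c>0$ and put $c'=\min\{c,b\}$. Theorem 1.2(3) gives a locally maximal $\Gamma_{c'}$ with $\Lambda\subset\Gamma_{c'}\subset B_{c'}(\Lambda)\subset B_c(\Lambda)$ and $h_{\rm top}(f|_{\Gamma_{c'}})=0$. By the previous paragraph, $\Gamma_{c'}\subset\Lambda_b$, so $\Gamma_{c'}$ is hyperbolic. Taking $\Gamma_c:=\Gamma_{c'}$ gives (3) of Theorem 1.3.

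There is no real obstacle. The only points needing care are that shadowing and expansiveness on the neighbourhood pass to the form required by Theorem 1.2, and that the sets $\Gamma_c$ must be kept inside the hyperbolic neighbourhood $\Lambda_b$ by taking $c\le b$.
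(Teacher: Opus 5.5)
Your proposal is correct and follows essentially the same route as the paper, which obtains Theorem 1.3 as a corollary of Theorem 1.2 and Lemma 1.3. Remark 1.2 supplies exactly the step you use: a closed invariant $\Gamma_c\subset B_c(\Lambda)$ with $c$ small lies in the hyperbolic set $\bigcap_{i\in\mathbb{Z}}f^i(B_b(\Lambda))$ and is therefore hyperbolic. You have simply written out the details the paper leaves implicit, namely restricting shadowing from $B_b(\Lambda)$ to $\Lambda$ and reducing to $c\le b$.
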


This paper consists of two sections and an appendix. In Section 2, we prove Theorem 1.2. In Appendix A, we give an alternative proof of a statement concerning Lemma 1.2.

\section{Proof of Theorem 1.2}

In this section, we prove Theorem 1.2. First, by adapting an argument in \cite{KM}, we prove the following lemma.

\begin{lem}
Given a homeomorphism $f\colon X\to X$ and a closed subset $S$ of $X$ with $f(S)=S$, if
\begin{itemize}
\item $S=CR(f|_S)$,
\item $Sen(f|_S)\ne\emptyset$,
\item $f$ has shadowing on $S$,
\end{itemize}
then for any $\epsilon>0$, there are $m\ge1$ and a closed subset $Y$ of $X$ with $f^m(Y)=Y$ such that
\begin{itemize}
\item $Y\subset\bigcap_{i\in\mathbb{Z}}f^i(B_\epsilon(S))$,
\item there is a surjective continuous map $\pi\colon Y\to\{0,1\}^\mathbb{Z}$ such that
\[
\pi\circ f^m|_Y=\sigma\circ\pi
\]
where $\sigma\colon\{0,1\}^\mathbb{Z}\to\{0,1\}^\mathbb{Z}$ is the shift map.
\end{itemize}
\end{lem}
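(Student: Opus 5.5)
Since $Sen(f|_S)\ne\emptyset$, fix $a>0$ and $x\in Sen_a(f|_S)$. Given $\epsilon>0$, we may assume $\epsilon<a/4$. Let $\delta>0$ be the constant that shadowing on $S$ gives for $\epsilon$, and take $\delta$ small compared with $\epsilon$ and $a$. The plan is to find two $\delta$-chains in $S$, both starting and ending at the same point $x$, with a common length $m$, that are separated by more than $2\epsilon$ at some common time. Concatenating these two loops freely will then code the full shift.

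\emph{Construction of the two loops.} By sensitivity of $f|_S$, there is $y\in S$ with $d(x,y)\le\delta$ and $d(f^j(x),f^j(y))>a$ for some $j\ge0$. Since $S=CR(f|_S)$ and $f|_S$ is a homeomorphism of $S$, every point of $S$ is chain recurrent, and $f^{j}(x)$ and $f^{j}(y)$ lie in the same chain component as $x$. Indeed, $y\to x$ because $(y,f(x),\dots)$ is a chain after replacing $y$ by $x$ at the first jump, so $y$ and $x$ are chain equivalent. Now form the two chains:
\[
\alpha=(x,f(x),\dots,f^j(x),\text{$\delta$-chain in $S$ back to }x),\qquad
\beta=(x,f(y),\dots,f^j(y),\text{$\delta$-chain in $S$ back to }x).
\]
The second chain is a $\delta$-chain because $d(f(x),f(y))$ is small; to arrange this we choose $y$ with $d(x,y)$ below the modulus of continuity for $\delta$. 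To equalize the lengths, append extra $\delta$-loops from $x$ to $x$ in $S$; these exist by chain recurrence. A convenient choice is to pad $\alpha$ with copies of $\beta$'s return loop and vice versa, so that both chains get the common length $m=|\alpha|+|\beta|$. At time $j$, the two chains are at distance greater than $a>2\epsilon$.

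\emph{Coding.} For $s\in\{0,1\}^{\mathbb Z}$, concatenate the blocks $\gamma_{s_k}$ (with $\gamma_0=\alpha$, $\gamma_1=\beta$) to get a $\delta$-pseudo orbit $\xi_s$ in $S$. Define
\[
Y=\{z\in X\colon z \text{ $\epsilon$-shadows } \xi_s \text{ for some } s\}.
\]
Then $Y$ is nonempty for every $s$ by shadowing. Every point of $Y$ has its full orbit within $\epsilon$ of $S$, so $Y\subset\bigcap_i f^i(B_\epsilon(S))$. Moreover $f^m(Y)=Y$, since shifting $s$ shifts the blocks. The sequence $s$ is uniquely determined by $z$: if $z$ shadowed both $\xi_s$ and $\xi_{s'}$ with $s_k\ne s'_k$, then at time $km+j$ the two pseudo orbits would be more than $a>2\epsilon$ apart. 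This yields $\pi(z)=s$ with $\pi\circ f^m=\sigma\circ\pi$, and $\pi$ is surjective.

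\emph{Closedness and continuity.} This is the main technical step. Both follow from a compactness argument. If $z_n\in Y$ shadow $\xi_{s^n}$ with $z_n\to z$ and $s^n\to s$ along a subsequence, then for each fixed $i$ the point $(\xi_{s^n})_i$ eventually equals $(\xi_s)_i$. Since the shadowing inequality is closed, $z$ shadows $\xi_s$. This gives that $Y$ is closed, and uniqueness of the code then forces $\pi(z_n)\to\pi(z)$, which is continuity. The only delicate point is making sure the padding keeps both blocks inside $S$ with equal length $m$ and a separation time. That is handled by using chain recurrence within the chain component of $x$, which all of $S$ provides since $S=CR(f|_S)$.
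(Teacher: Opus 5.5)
Your argument is correct and is essentially the paper's proof. You build two equal-length $\delta$-loops in $S$ at a common base point that follow the true orbits of $x$ and of a nearby sensitivity partner $y$ up to a time where they are more than $2\epsilon$ apart, concatenate them freely, take $Y$ to be the set of $\epsilon$-shadowing points, and read off $\pi$ from the separation. Your $\alpha\beta$ versus $\beta\alpha$ trick is a clean way to equalize the lengths, which the paper leaves implicit.

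One justification is false, though the step it supports is still true. Your chain $(y,f(x),\dots)$ is a $\delta$-chain only for the fixed $\delta$, since it relies on $d(f(x),f(y))\le\delta$. So it does not give $y\to x$, which requires chains at every scale. In fact $y$ need not be chain equivalent to $x$. For example, $\sigma\times\mathrm{id}_K$ on $\{0,1\}^{\mathbb Z}\times K$, with $K$ a Cantor set, satisfies all the hypotheses, and a sensitivity partner of $(u,k)$ may be $(v,k')$ with $k'\ne k$, which lies in a different chain component.

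What you actually need is only a $\delta$-chain in $S$ from $f^j(y)$ to $x$, and this does exist. Since $y\in CR(f|_S)$, there is a $\delta/2$-chain in $S$ from $f^j(y)$ to $y$. Replacing its last point by $x$ gives a $\delta$-chain as soon as $d(x,y)\le\delta/2$. This endpoint swap is exactly what the paper does when it replaces $(x,y)$ by $(p,p)$ at times $0$ and $m$.
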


\begin{proof}
Let $g=f|_S\colon S\to S$. Since $Sen(g)\ne\emptyset$, we have $Sen_a(g)\ne\emptyset$ for some $a>0$. Let $0<\epsilon<a/2$. Since $f$ has shadowing on $S$, there is $\delta>0$ such that every $\delta$-pseudo orbit $(v_i)_{i\in\mathbb{Z}}$ of $f$ with $v_i\in S$ for all $i\in\mathbb{Z}$ is $\epsilon$-shadowed by some $v\in X$. Let $p\in Sen_a(g)$ and $\eta>0$. As $p\in Sen_a(g)$, there are $x,y\in S$ and $k\ge1$
such that
\[
\max\{d(p,x),d(p,y)\}\le\eta
\]
and
\[
d(f^k(x),f^k(y))>a.
\]
Since $x,y\in S$ and $S=CR(g)$, there is a pair
\[
((x_i)_{i=0}^m,(y_i)_{i=0}^m)
\]
of $\eta$-chains of $g$ such that $(x_0,y_0)=(x_m,y_m)=(x,y)$ and $k+1\le m$. Let $(z_0,w_0)=(z_m,w_m)=(p,p)$ and
\[
(z_i,w_i)=(x_i,y_i)
\]
for all $1\le i\le m-1$. If $\eta>0$ is sufficiently small, then
\[
((z_i)_{i=0}^m,(w_i)_{i=0}^m)
\]
is a pair of $\delta$-chains of $g$ such that
\begin{align*}
d(z_k,w_k)-2\epsilon&\ge d(f^k(x),f^k(y))-2\epsilon-d(f^k(x),z_k)-d(f^k(y),w_k)\\
&>a-2\epsilon-d(f^k(x),x_k)-d(f^k(y),y_k)>0. 
\end{align*}
Given $s=(s_j)_{j\in\mathbb{Z}}\in\{z,w\}^\mathbb{Z}$, we define a $\delta$-pseudo orbit $\xi_s=(v_i^s)_{i\in\mathbb{Z}}$ of $f$ by
\[
v_{jm+r}^s=(s_j)_r
\]
for all $j\in\mathbb{Z}$ and $0\le r\le m-1$. Note that $v_i^s\in S$ for all $i\in\mathbb{Z}$. We consider a subset $Y$ of $X$ defined as
\[
Y=\{v\in X\colon\text{$\xi_s$ is $\epsilon$-shadowed by $v$ for some $s\in\{z,w\}^\mathbb{Z}$}\}.
\]
Note that $Y\subset\bigcap_{i\in\mathbb{Z}}f^i(B_\epsilon(S))$. We easily see that $Y$ is a closed subset of $X$ with $f^m(Y)=Y$. We define a map $\pi\colon Y\to\{z,w\}^\mathbb{Z}$ so that $\xi_{\pi(v)}$ is $\epsilon$-shadowed by $v$ for all $v\in Y$. It follows that $\pi$ is surjective, continuous, and satisfies
\[
\pi\circ f^m|_{Y}=\sigma\circ\pi
\]
where $\sigma\colon\{z,w\}^\mathbb{Z}\to\{z,w\}^\mathbb{Z}$ is the shift map. This completes the proof of the lemma.
\end{proof}

\begin{rem}
\normalfont
If $e>0$ is an expansive constant for $f$ on $B_b(S)$, where $b>0$, and
\[
0<\epsilon\le\min\{b,e/2\},
\]
then the map $\pi\colon Y\to\{z,w\}^\mathbb{Z}$ obtained in the above proof is a homeomorphism.
\end{rem}

Let $f\colon X\to X$ be a continuous map. For any $\epsilon>0$, there is $\delta>0$ such that if $(x_i)_{i=0}^k$ is a $\delta$-chain of $f$ with $x_0=x_k$, then $\{x_i\colon0\le i\le k\}\subset B_\epsilon(CR(f))$. By this, we easily see that
\[
CR(f)=CR(f|_{CR(f)}).
\]
Given a homeomorphism $f\colon X\to X$, let $\Lambda$ be a closed subset of $X$ with $f(\Lambda)=\Lambda$. Letting $S=CR(f|_\Lambda)$, we see that
\begin{itemize}
\item $f(S)=f|_\Lambda(S)=S$,
\item $S=CR(f|_\Lambda)=CR((f|_\Lambda)|_{CR(f|_\Lambda)})=CR((f|_\Lambda)|_S)=CR(f|_S)$,
\item if $f$ has shadowing on $\Lambda$, then $f$ has shadowing on $S$.
\end{itemize}

By Lemma 2.1 and the above observations, we obtain the following lemma, concerning the implication (2)$\implies$(1) in Theorem 1.2.

\begin{lem}
Given a homeomorphism $f\colon X\to X$ and a closed subset $\Lambda$ of $X$ with $f(\Lambda)=\Lambda$, if
\begin{itemize}
\item $Sen(f|_{CR(f|_\Lambda)})\ne\emptyset$,
\item $f$ has shadowing on $\Lambda$,
\end{itemize}
then for any $\epsilon>0$, there are $m\ge1$ and a closed subset $Y$ of $X$ with $f^m(Y)=Y$ such that
\begin{itemize}
\item $Y\subset\bigcap_{i\in\mathbb{Z}}f^i(B_\epsilon(\Lambda))$,
\item there is a surjective continuous map $\pi\colon Y\to\{0,1\}^\mathbb{Z}$ such that
\[
\pi\circ f^m|_Y=\sigma\circ\pi
\]
where $\sigma\colon\{0,1\}^\mathbb{Z}\to\{0,1\}^\mathbb{Z}$ is the shift map.
\end{itemize}
\end{lem}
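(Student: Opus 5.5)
The plan is to apply Lemma 2.1 with $S=CR(f|_\Lambda)$ and then enlarge the neighborhood from $B_\epsilon(S)$ to $B_\epsilon(\Lambda)$.

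First we check the hypotheses of Lemma 2.1 using the observations recorded just before the statement. Since $CR(f|_\Lambda)$ is closed in $\Lambda$ and $\Lambda$ is closed in $X$, the set $S$ is a closed subset of $X$, and it satisfies $f(S)=S$. Moreover, $S=CR(f|_S)$, because $CR(g)=CR(g|_{CR(g)})$ for any continuous map $g$, here applied with $g=f|_\Lambda$. By assumption, $Sen(f|_S)=Sen(f|_{CR(f|_\Lambda)})\ne\emptyset$.

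Shadowing on $\Lambda$ passes to shadowing on $S$, since $S\subset\Lambda$. Indeed, every $\delta$-pseudo orbit with all terms in $S$ is in particular a $\delta$-pseudo orbit with all terms in $\Lambda$, so it is $\epsilon$-shadowed by some point of $X$.

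Lemma 2.1 therefore applies. For the given $\epsilon>0$ it produces $m\ge1$, a closed set $Y$ with $f^m(Y)=Y$, and a surjective continuous map $\pi\colon Y\to\{0,1\}^\mathbb{Z}$ with $\pi\circ f^m|_Y=\sigma\circ\pi$. It also gives
\[
Y\subset\bigcap_{i\in\mathbb{Z}}f^i(B_\epsilon(S)).
\]
Since $S\subset\Lambda$, we have $B_\epsilon(S)\subset B_\epsilon(\Lambda)$, and hence $f^i(B_\epsilon(S))\subset f^i(B_\epsilon(\Lambda))$ for every $i$. Intersecting over $i\in\mathbb{Z}$ gives $Y\subset\bigcap_{i\in\mathbb{Z}}f^i(B_\epsilon(\Lambda))$, which is the required inclusion.

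There is no real obstacle. The only points needing care are that $S$ is closed in $X$ and that $S=CR(f|_S)$; the latter rests on the $\delta$-chain observation $CR(g)=CR(g|_{CR(g)})$ stated just before the lemma.
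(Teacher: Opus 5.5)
Your proposal is correct and matches the paper's argument: the paper also obtains this lemma by applying Lemma 2.1 to $S=CR(f|_\Lambda)$. It uses the same observations, namely $f(S)=S$, $S=CR(f|_S)$, and that shadowing on $\Lambda$ restricts to shadowing on $S$, and then enlarges the neighborhood via $B_\epsilon(S)\subset B_\epsilon(\Lambda)$.
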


\begin{rem}
\normalfont
The condition in Lemma 2.2 implies that
\[
h_{\rm top}(f|_{\Lambda_\epsilon})\ge h(f,Y)=\frac{1}{m}h(f^m,Y)\ge\frac{1}{m}h(\sigma,\{0,1\}^\mathbb{Z})=\frac{1}{m}\log2>0,
\]
where $\Lambda_\epsilon=\bigcap_{i\in\mathbb{Z}}f^i(B_\epsilon(\Lambda))$.
\end{rem}

The proof of the following lemma is not difficult, so left an exercise for the reader.

\begin{lem}
Let $f\colon X\to X$ be a homeomorphism and let $S$ be a closed subset of $X$ with $f(S)=S$. If
\begin{itemize}
\item $f|_S$ has shadowing,
\item $f$ is expansive on $B_b(S)$ for some $b>0$,
\end{itemize}
then $S$ is locally maximal.
\end{lem}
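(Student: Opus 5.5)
We show that for a suitably small $r>0$, every point of $\bigcap_{i\in\mathbb{Z}}f^i(B_r(S))$ already lies in $S$. Let $e>0$ be an expansive constant for $f$ on $B_b(S)$, and fix $0<\epsilon<\min\{b,e/2\}$. Since $f|_S$ has shadowing, there is $\delta>0$ such that every $\delta$-pseudo orbit of $f|_S$ is $\epsilon$-shadowed by a point of $S$. The key point is that the shadowing point can be taken in $S$ itself, not merely in $X$; this is what distinguishes "$f|_S$ has shadowing" from "$f$ has shadowing on $S$".

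By uniform continuity of $f$ on the compact space $X$, choose $0<r\le\epsilon$ such that $d(u,u')\le r$ implies $d(f(u),f(u'))\le\delta/2$. We may also assume $r\le\delta/2$.

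Now let $x\in\bigcap_{i\in\mathbb{Z}}f^i(B_r(S))$, so that $d(f^i(x),S)\le r$ for all $i\in\mathbb{Z}$. Since $S$ is closed, for each $i$ we can pick $x_i\in S$ with $d(f^i(x),x_i)\le r$. Then
\[
d(f(x_i),x_{i+1})\le d(f(x_i),f^{i+1}(x))+d(f^{i+1}(x),x_{i+1})\le\delta/2+r\le\delta .
\]
Hence $(x_i)_{i\in\mathbb{Z}}$ is a $\delta$-pseudo orbit of $f|_S$, and it is $\epsilon$-shadowed by some $y\in S$. It follows that
\[
d(f^i(x),f^i(y))\le d(f^i(x),x_i)+d(x_i,f^i(y))\le r+\epsilon\le 2\epsilon<e
\]
for all $i\in\mathbb{Z}$.

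To apply expansiveness we need both $x$ and $y$ in $\bigcap_{i\in\mathbb{Z}}f^i(B_b(S))$. For $x$ this holds because $r\le b$. For $y$ it holds because $f^i(y)\in S$ for all $i$. Expansiveness on $B_b(S)$ therefore gives $x=y\in S$. This yields $\bigcap_{i\in\mathbb{Z}}f^i(B_r(S))\subset S$. The reverse inclusion is immediate from $f(S)=S$, so $S=\bigcap_{i\in\mathbb{Z}}f^i(B_r(S))$ and $S$ is locally maximal.

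The only delicate step is keeping track of which definition is used. Shadowing must come from $f|_S$, so that the shadowing point $y$ lies in $S$. The tolerance $r$ must be chosen small relative to both $\delta$ and $e$, so that the orbit of $x$ close to $S$ gives a genuine $\delta$-pseudo orbit in $S$ and the resulting orbits stay within the expansive constant. There is no real obstacle beyond this bookkeeping.
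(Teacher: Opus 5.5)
Your argument is correct. It is the standard proof the paper has in mind; the paper gives none and leaves this lemma as an exercise. You pick $r$ small relative to the shadowing constant $\delta$ and the expansive constant $e$. You project the orbit of $x\in\bigcap_{i\in\mathbb{Z}}f^i(B_r(S))$ to a $\delta$-pseudo orbit in $S$, shadow it by some $y\in S$, and then apply expansiveness on $B_b(S)$ to get $x=y$.

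You were also right to stress that the shadowing point must lie in $S$, which is what shadowing of $f|_S$ provides. Shadowing of $f$ on $S$ alone would not suffice: together with Lemma 1.3, it would make every hyperbolic set locally maximal, which is false.
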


Given a continuous map $f\colon X\to X$, a closed subset $S$ of $X$ with $f(S)=S$ is called a {\em periodic orbit} for $f$ if there are $p\in S$ and $j\ge1$ such that $f^j(p)=p$ and $S=\{f^i(p)\colon0\le i\le j-1\}$. As a corollary of Lemma 2.3, we obtain the following lemma.

\begin{lem}
Let $f\colon X\to X$ be a homeomorphism. For a periodic orbit $S$ for $f$, if $f$ is expansive on $B_b(S)$ for some $b>0$, then $S$ is locally maximal.
\end{lem}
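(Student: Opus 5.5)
Lemma 2.4 should follow from Lemma 2.3 once we check its first hypothesis: $f|_S$ has shadowing whenever $S$ is a periodic orbit. Expansiveness on $B_b(S)$ is assumed, so nothing else is needed. The plan is to verify shadowing of $f|_S$ directly, using only that $S$ is finite, and then invoke Lemma 2.3.

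Write $S=\{f^i(p)\colon 0\le i\le j-1\}$ with $f^j(p)=p$. Since $S$ is finite, let
\[
\rho=\min\{d(u,v)\colon u,v\in S,\ u\ne v\}>0
\]
(if $S$ is a single point, set $\rho=1$). Given $\epsilon>0$, take $\delta=\min\{\epsilon,\rho/2\}$. Let $(x_i)_{i\in\mathbb{Z}}$ be a $\delta$-pseudo orbit of $f|_S$, so that $x_i\in S$ for all $i$. Then $f(x_i)$ and $x_{i+1}$ both lie in $S$ and satisfy $d(f(x_i),x_{i+1})\le\delta<\rho$. Hence $x_{i+1}=f(x_i)$ for every $i$, and by induction in both directions $x_i=f^i(x_0)$ for all $i\in\mathbb{Z}$. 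The pseudo orbit is therefore a true orbit, and it is $0$-shadowed (in particular $\epsilon$-shadowed) by $x_0\in S$. So $f|_S$ has shadowing.

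With this, both hypotheses of Lemma 2.3 hold: $f|_S$ has shadowing, and $f$ is expansive on $B_b(S)$. Lemma 2.3 then gives that $S$ is locally maximal. No step is a real obstacle. The only point that needs care is that shadowing of $f|_S$ refers to pseudo orbits lying entirely in $S$, with the shadowing point also in $S$. That is exactly the situation the discreteness argument handles.
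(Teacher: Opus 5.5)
Your proposal is correct and follows the paper's route: the paper obtains this lemma as an immediate corollary of Lemma 2.3. The only step the paper leaves implicit is that $f|_S$ has shadowing for a finite (periodic) orbit $S$, and your minimum-distance argument, which yields the shadowing point $x_0\in S$, verifies exactly that.
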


By using Lemma 2.4 and Theorem 1.1, we prove the following lemma, concerning the implication (1)$\implies$(2) in Theorem 1.2.

\begin{lem}
Given a homeomorphism $f\colon X\to X$ and a closed subset $\Lambda$ of $X$ with $f(\Lambda)=\Lambda$, if
\begin{itemize}
\item $f$ is expansive on $B_b(\Lambda)$ for some $b>0$,
\item $h_{\rm top}(f|_{\Lambda_\epsilon})>0$, where $\Lambda_\epsilon=\bigcap_{i\in\mathbb{Z}}f^i(B_\epsilon(\Lambda))$, for all $\epsilon>0$, 
\end{itemize}
then $Sen(f|_{CR(f|_\Lambda)})\ne\emptyset$.
\end{lem}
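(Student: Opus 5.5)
We argue by contraposition: assuming $Sen(f|_{CR(f|_\Lambda)})=\emptyset$, we produce some $\epsilon>0$ with $h_{\rm top}(f|_{\Lambda_\epsilon})=0$.

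\emph{Step 1: $S=CR(f|_\Lambda)$ is a finite union of periodic orbits.} Let $S=CR(f|_\Lambda)$. By the observations preceding Lemma 2.2, $S$ is closed, $f(S)=S$ and $CR(f|_S)=S$. Since $S\subset\Lambda\subset\bigcap_{i\in\mathbb{Z}}f^i(B_b(\Lambda))$, the restriction $f|_S$ is an expansive homeomorphism. Theorem 1.1 applied to $f|_S$ turns $Sen(f|_S)=\emptyset$ into the statement that $S=CR(f|_S)$ is finite and $S=Per(f|_S)$. So $S=O_1\sqcup\dots\sqcup O_N$ with each $O_k$ a periodic orbit. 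Each $O_k\subset B_b(\Lambda)$, so $f$ is expansive on a neighborhood of $O_k$, and Lemma 2.4 gives $r_k>0$ with $O_k=\bigcap_{i\in\mathbb{Z}}f^i(B_{r_k}(O_k))$.

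\emph{Step 2: choose separating radii.} Fix $r>0$ with $r\le\min_k r_k$ and with the sets $B_r(O_k)$ pairwise disjoint. By uniform continuity, choose $0<r'\le r$ such that $f(B_{r'}(O_k))\subset B_r(O_k)$ for every $k$. Now let $Z\subset B_{r'}(S)$ satisfy $f(Z)=Z$. For $z\in Z$ with $z\in B_{r'}(O_k)$, we have $f(z)\in B_r(O_k)$. But $f(z)$ also lies in some $B_{r'}(O_l)\subset B_r(O_l)$, so disjointness forces $l=k$. By induction in both time directions, the whole orbit of $z$ stays in $B_{r'}(O_k)$. Hence $z\in\bigcap_i f^i(B_{r_k}(O_k))=O_k$, and therefore $Z\subset S$.

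\emph{Step 3: chain recurrence is upper semicontinuous as $\epsilon\to0$.} We claim that $CR(f|_{\Lambda_\epsilon})\subset B_{r'}(S)$ for all small $\epsilon$. Suppose not. Then there are $\epsilon_n\to0$ and points $x_n\in CR(f|_{\Lambda_{\epsilon_n}})$ with $d(x_n,S)>r'$, and we may assume $x_n\to x$. We show $x\in CR(f|_\Lambda)=S$, a contradiction.
\begin{itemize}
\item Given $\delta>0$, take a $\delta$-chain of $f|_{\Lambda_{\epsilon_n}}$ from $x_n$ to $x_n$.
\item Replace each point by a point of $\Lambda$ within $\epsilon_n$; replace the endpoints by $x$ itself.
\item By uniform continuity of $f$, for $n$ large this yields a $3\delta$-chain of $f|_\Lambda$ from $x$ to $x$.
\end{itemize}
For the last bullet we also need $x\in\Lambda$. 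This holds because $\bigcap_{\epsilon>0}\Lambda_\epsilon=\Lambda$, as $\Lambda$ is closed. Since $CR(f|_{\Lambda_\epsilon})$ is $f$-invariant, Step 2 applied to $Z=CR(f|_{\Lambda_\epsilon})$ gives $CR(f|_{\Lambda_\epsilon})\subset S$, so this set is finite.

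\emph{Step 4: entropy lives on the chain recurrent set.} By the variational principle, $h_{\rm top}(f|_{\Lambda_\epsilon})$ is the supremum of metric entropies over invariant measures. Every such measure is supported on the nonwandering set of $f|_{\Lambda_\epsilon}$, which is contained in $CR(f|_{\Lambda_\epsilon})$. That set is finite, so every invariant measure has zero entropy, and $h_{\rm top}(f|_{\Lambda_\epsilon})=0$, contradicting the hypothesis.

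The main obstacle is Step 3 together with the bookkeeping of Step 2, which rules out orbits that hop between neighborhoods of different periodic orbits. Local maximality of each single orbit is exactly what Lemma 2.4 supplies, and the finiteness from Theorem 1.1 is what makes a uniform $r'$ possible.
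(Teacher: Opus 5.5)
Your proof is correct, and it uses the same ingredients as the paper: Theorem 1.1, local maximality of periodic orbits (Lemma 2.4), and the fact that entropy is carried by the chain recurrent set. It organizes them differently.

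The paper argues directly. For each $\epsilon\le b$ it picks an infinite chain component $C_\epsilon$ of $f|_{\Lambda_\epsilon}$ and passes to a Hausdorff limit $B\subset\Lambda$. The limit $B$ is chain transitive, so it sits inside a single chain component $C$ of $f|_\Lambda$. If $C$ were finite, $B$ would be one locally maximal periodic orbit, forcing $C_{\epsilon_j}=B$, a contradiction; so $C$ is infinite and Theorem 1.1 for $f|_\Lambda$ gives sensitivity. Because the paper works with individual chain-transitive pieces, it only ever isolates one periodic orbit, and your Step 2 bookkeeping about orbits hopping between neighborhoods never arises.

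You instead argue by contraposition on the whole set $S=CR(f|_\Lambda)$, using pointwise limits of chain recurrent points. This forces you to isolate the full finite union of orbits. Step 2 could be shortened by applying Lemma 2.3 directly to $S$, since $f|_S$ trivially has shadowing on a finite invariant set.

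In exchange, your route yields a cleaner intermediate fact: under the contrapositive hypothesis, $CR(f|_{\Lambda_\epsilon})=CR(f|_\Lambda)$ for all small $\epsilon$. It also makes the entropy step explicit. The paper's sentence ``since $h_{\rm top}(f|_{\Lambda_\epsilon})>0$ there is an infinite chain component'' silently uses both the concentration of entropy on the chain recurrent set and Theorem 1.1 for $f|_{\Lambda_\epsilon}$, which is why it needs $\epsilon\le b$. You need only the former.

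One minor point: $x\in\Lambda$ follows most simply from $d(x_n,\Lambda)\le\epsilon_n\to0$. Your appeal to $\bigcap_{\epsilon>0}\Lambda_\epsilon=\Lambda$ also works, but it needs the nestedness and closedness of the $\Lambda_\epsilon$.
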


\begin{proof}
Let $0<\epsilon\le b$ and note that
\[
\Lambda_\epsilon\subset B_\epsilon(\Lambda)\subset B_b(\Lambda).
\]
Since $h_{\rm top}(f|_{\Lambda_\epsilon})>0$, there is $C_\epsilon\in\mathcal{C}(f|_{\Lambda_\epsilon})$ such that $C_\epsilon$ is an infinite set. We can take a sequence $b\ge\epsilon_1>\epsilon_2>\cdots$ and a closed subset $B$ of $X$ such that $\lim_{j\to\infty}\epsilon_j=0$ and $\lim_{j\to\infty}C_{\epsilon_j}=B$ (with respect to the Hausdorff distance). Note that $B\subset\Lambda$. Since $f(C_{\epsilon_j})=C_{\epsilon_j}$ for all $j\ge1$, we have $f(B)=B$. Since $f|_{C_{\epsilon_j}}$ is chain transitive for all $j\ge1$, $f|_B$ is chain transitive. We obtain $B\subset C$ for some $C\in\mathcal{C}(f|_\Lambda)$. If $C$ is a finite set, then $C$ is a periodic orbit for $f$ and so is $B$. Since $f$ is expansive on $B_b(B)$, Lemma 2.4 implies that $B$ is locally maximal. By $\lim_{j\to\infty}C_{\epsilon_j}=B$, we obtain $C_{\epsilon_j}=B$ for some $j\ge1$; however, this contradicts that $C_{\epsilon_j}$ is an infinite set. It follows that $C$ is an infinite set. Since $f|_\Lambda$ is expansive, by Theorem 1.1, we conclude that $Sen(f|_{CR(f|_\Lambda)})\ne\emptyset$, completing the proof.
\end{proof}

As the following example shows, the assumption in Lemma 2.5 that $f$ is expansive on $B_b(\Lambda)$ for some $b>0$ cannot be removed.

\begin{ex}
\normalfont
Let $C$ be the Cantor ternary set in $[0,1]$ and let $\sigma\colon\{0,1\}^\mathbb{Z}\to\{0,1\}^\mathbb{Z}$ be the shift map. Let $\phi\colon C\to\{0,1\}^\mathbb{Z}$ be a homeomorphism and let
\[
g=\phi^{-1}\circ\sigma\circ\phi\colon C\to C.
\]
We define a compact subset $X$ of $[0,1]^2$ and a homeomorphism $f\colon X\to X$ by
\begin{itemize}
\item
\[
X=\{(0,0)\}\cup\bigcup_{n\ge1}\{(n^{-1},n^{-1}y)\colon y\in C\},
\]
\item $f(0,0)=(0,0)$,
\item $f(n^{-1},n^{-1}y)=(n^{-1},n^{-1}g(y))$ for all $n\ge1$ and $y\in C$.
\end{itemize}
Let $\Lambda=\{(0,0)\}$ and note that $f(\Lambda)=\Lambda$. We easily see that
\begin{itemize}
\item $f|_\Lambda$ is (trivially) expansive,
\item $h_{\rm top}(f|_{\Lambda_\epsilon})\ge\log 2>0$, where $\Lambda_\epsilon=\bigcap_{i\in\mathbb{Z}}f^i(B_\epsilon(\Lambda))$, for all $\epsilon>0$;
\end{itemize}
however, $Sen(f|_{CR(f|_\Lambda)})=Sen(f|_\Lambda)=\emptyset$. 
\end{ex}

The next lemma can be proved by a standard argument.

\begin{lem}
Let $f\colon X\to X$ be a homeomorphism. Let $S$ be a closed subset of $X$ and let $e>0$ be an expansive constant for $f$ on $S$. For every $\epsilon>0$, there is $n\ge0$ such that
\[
\sup_{-n\le i \le n}d(f^i(x),f^i(y))\le e
\]
implies $d(x,y)\le\epsilon$ for all $x,y\in\bigcap_{i=-n}^nf^i(S)$.
\end{lem}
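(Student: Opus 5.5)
We argue by contradiction with a compactness argument. Suppose the conclusion fails for some $\epsilon>0$. Then for every $n\ge0$ there are points $x_n,y_n\in\bigcap_{i=-n}^nf^i(S)$ with
\[
\sup_{-n\le i\le n}d(f^i(x_n),f^i(y_n))\le e
\quad\text{and}\quad d(x_n,y_n)>\epsilon .
\]
Since $S$ is closed in the compact space $X$, we can pass to a subsequence and assume $x_n\to x$ and $y_n\to y$ as $n\to\infty$. In the limit we get $d(x,y)\ge\epsilon>0$, so $x\ne y$.

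Next we check that $x,y\in\bigcap_{i\in\mathbb{Z}}f^i(S)$. Fix $i\in\mathbb{Z}$. For all $n\ge|i|$ we have $x_n\in f^i(S)$, that is, $f^{-i}(x_n)\in S$. Since $f^{-i}$ is continuous and $S$ is closed, letting $n\to\infty$ gives $f^{-i}(x)\in S$, so $x\in f^i(S)$. The same argument applies to $y$.

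Finally we check that the orbits of $x$ and $y$ stay $e$-close. Fix $i\in\mathbb{Z}$. For $n\ge|i|$ we have $d(f^i(x_n),f^i(y_n))\le e$, and by continuity of $f^i$ this passes to the limit as $d(f^i(x),f^i(y))\le e$. Hence
\[
\sup_{i\in\mathbb{Z}}d(f^i(x),f^i(y))\le e .
\]
Because $e$ is an expansive constant for $f$ on $S$, this forces $x=y$, contradicting $x\ne y$.

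The only point needing care is that we are given non-strict inequalities ($\le e$ in the definition of expansiveness on $S$). This is exactly what allows the bounds to survive the limit. The passage to the invariant set $\bigcap_{i\in\mathbb{Z}}f^i(S)$ is routine, using only that $S$ is closed and $f$ is a homeomorphism.
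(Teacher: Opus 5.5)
Your proof is correct, and it is essentially the ``standard argument'' the paper cites without writing out: the contradiction-and-compactness argument. You extract convergent subsequences, use closedness of $S$ to place the limits in $\bigcap_{i\in\mathbb{Z}}f^i(S)$, and pass the non-strict bounds to the limit. One cosmetic remark is that compactness of $X$ alone gives the convergent subsequences, since you check separately that the limits lie in $\bigcap_{i\in\mathbb{Z}}f^i(S)$.
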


The following lemma is essentially proved in \cite{A2}, but here we give a proof for completeness.

\begin{lem}
Let $f\colon X\to X$ be a homeomorphism and let $\Lambda$ be a closed subset of $X$ with $f(\Lambda)=\Lambda$ such that
\begin{itemize}
\item $f$ has shadowing on $\Lambda$,
\item $f$ is expansive on $B_b(\Lambda)$ for some $b>0$.
\end{itemize}
If $\Lambda$ is totally disconnected, then for every $c>0$, there is a closed subset $\Gamma_c$ of $X$ such that
\begin{itemize}
\item $f(\Gamma_c)=\Gamma_c$ and $\Lambda\subset\Gamma_c\subset B_c(\Lambda)$,
\item there are a subshift of finite type $\Xi$ and a homeomorphism $h\colon\Xi\to\Gamma_c$ such that $h\circ\sigma|_\Xi=f|_{\Gamma_c}\circ h$, where $\sigma$ is the shift map, thus $f|_{\Gamma_c}$ has shadowing.
\end{itemize}
\end{lem}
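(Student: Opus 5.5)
Since $\Lambda$ is compact and totally disconnected, I will build $\Gamma_c$ as the set of orbits shadowing pseudo orbits that follow a finite clopen partition of $\Lambda$, and check that this set is conjugate to a subshift of finite type.

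Fix $c>0$ and let $e>0$ be an expansive constant for $f$ on $B_b(\Lambda)$. Choose $0<\epsilon\le\min\{b,c,e/4\}$ and take $\delta>0$ from shadowing on $\Lambda$ for this $\epsilon$. By total disconnectedness, take a finite partition $\Lambda=P_1\sqcup\cdots\sqcup P_N$ into clopen sets of diameter $<\delta/2$. Pick a point $p_a\in P_a$ for each $a$. Let $\Xi\subset\{1,\dots,N\}^{\mathbb{Z}}$ be the subshift of finite type whose allowed transitions $a\to a'$ are those with $f(P_a)\cap P_{a'}\ne\emptyset$. For $s\in\Xi$ the sequence $(p_{s_i})_{i\in\mathbb{Z}}$ is a $\delta$-pseudo orbit in $\Lambda$. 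This follows from uniform continuity of $f$, after shrinking the partition mesh so that $d(f(u),f(u'))<\delta/2$ whenever $u,u'$ lie in a common $P_a$. Hence each such pseudo orbit is $\epsilon$-shadowed by some point $h(s)$.

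The orbit of $h(s)$ stays in $B_\epsilon(\Lambda)\subset B_b(\Lambda)$. Since two shadowing points are $2\epsilon\le e$ close along the whole orbit, expansiveness on $B_b(\Lambda)$ gives uniqueness of $h(s)$. Thus $h\colon\Xi\to X$ is well defined, and $h\circ\sigma=f\circ h$ holds by uniqueness. Continuity of $h$ follows from a standard compactness argument combined with uniqueness, or via Lemma 2.6. Set $\Gamma_c=h(\Xi)$, which is closed, $f$-invariant, and contained in $B_\epsilon(\Lambda)\subset B_c(\Lambda)$. Every $x\in\Lambda$ lies in $\Gamma_c$: its itinerary $s_i=a$ iff $f^i(x)\in P_a$ lies in $\Xi$, and $x$ itself $\epsilon$-shadows $(p_{s_i})$ since diameters are $<\delta/2\le\epsilon$. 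Uniqueness then gives $h(s)=x$.

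The main obstacle is injectivity of $h$. If $h(s)=h(t)$, then $d(p_{s_i},p_{t_i})\le2\epsilon$ for all $i$, but this need not force $s=t$. To fix this, I would choose $\epsilon$ smaller than half the minimal distance between distinct partition elements, which is positive since they are disjoint compact sets. Then $d(p_{s_i},p_{t_i})\le2\epsilon$ forces $s_i=t_i$. This requires fixing the partition first and then choosing $\epsilon$, but $\delta$ depends on $\epsilon$ and the mesh depends on $\delta$, so the choices are circular.

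I would break the circularity by a two-scale argument. Fix a coarse clopen partition $\mathcal{Q}$ with separation $4\epsilon_0$, and work with a refinement $\mathcal{P}$ of fine mesh. Code by $\mathcal{P}$ only through blocks, using Lemma 2.6 to pass from closeness along a window of length $2n+1$ to closeness at time $0$. Concretely, take $\Xi$ to be the higher-block SFT of $\mathcal{Q}$-itineraries over windows of length $2n+1$. Injectivity would then follow because points $2\epsilon$-close along the whole orbit have equal $\mathcal{Q}$-symbols at every time. Finally, $f|_{\Gamma_c}$ is conjugate to an SFT, so it has shadowing.
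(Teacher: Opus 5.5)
Your final two-scale construction is essentially the paper's proof. The paper fixes one clopen partition $A_0,\dots,A_{m-1}$ of $\Lambda$ and takes $c$ so small that the sets $B_c(A_k)$ are pairwise disjoint, which plays the role of your separation $4\epsilon_0$. It lets $\Xi$ be the SFT of sequences all of whose $(2n+1)$-windows occur as itineraries of points of $\Lambda$, and builds the pseudo orbit from points of $\Lambda$ realizing those windows. Lemma 2.6 then makes this a $\delta$-pseudo orbit, and injectivity comes from disjointness exactly as in your argument, so your preliminary one-step coding by a fine partition can simply be dropped.

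The one condition you must add is that the cells of $\mathcal{Q}$ have diameter $<e$; the paper requires $\sup_{x,y\in A_k}d(x,y)<e$. Without it, two points of $\Lambda$ sharing a $\mathcal{Q}$-window need not be $e$-close along that window, so Lemma 2.6 does not apply and the window pseudo orbit need not be a $\delta$-pseudo orbit.
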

 
\begin{proof}
Let $e>0$ be an expansive constant for $f$ on $B_b(\Lambda)$. Since $\Lambda$ is totally disconnected, there are clopen subsets $A_0,A_1,\dots,A_{m-1}$ of $\Lambda$, $m\ge1$, such that
\begin{itemize}
\item $\Lambda=\bigsqcup_{k=0}^{m-1}A_k$, a disjoint union,
\item $\sup_{x,y\in A_k}d(x,y)<e$ for every $0\le k\le m-1$.
\end{itemize}
We fix $0<c\le b$ such that
\begin{itemize}
\item $B_c(A_k)\cap B_c(A_l)=\emptyset$ for all $0\le k<l\le m-1$,
\item $\sup_{x,y\in B_c(A_k)}d(x,y)\le e$ for every $0\le k\le m-1$.
\end{itemize}
Let $B_k=B_c(A_k)$ for all $0\le k\le m-1$. Let
\[
\Sigma=\{(s_j)_{j\in\mathbb{Z}}\in\{0,1,\dots,m-1\}^\mathbb{Z}\colon\bigcap_{j\in\mathbb{Z}}f^{-j}(A_{s_j})\ne\emptyset\},
\]
a subshift, and
\[
W=\{(w_j)_{j=-n}^n\in\{0,1,\dots,m-1\}^{2n+1}\colon(w_j)_{j=-n}^n=(s_j)_{j=-n}^n\:\:\text{for some}\:\:(s_j)_{j\in\mathbb{Z}}\in\Sigma\},
\]
where $n\ge1$. Consider
\[
\Xi=\{(s_j)_{j\in\mathbb{Z}}\in\{0,1,\dots,m-1\}^\mathbb{Z}\colon(s_{j+a})_{a=-n}^n\in W\:\:\text{for all}\:\:j\in\mathbb{Z}\}
\]
and note that
\begin{itemize}
\item $\Sigma\subset\Xi$,
\item $\Xi$ is a subshift of finite type.
\end{itemize}
We shall show that if $n$ is sufficiently large, then for any $(s_j)_{j\in\mathbb{Z}}\in\Xi$,
\[
\bigcap_{j\in\mathbb{Z}}f^{-j}(B_{s_j})
\]
is non-empty and so a singleton. Let $s=(s_j)_{j\in\mathbb{Z}}\in\Xi$. For each $j\in\mathbb{Z}$, as $(s_{j+a})_{a=-n}^n\in W$, we have
\[
x_{s_j}\in\bigcap_{a=-n}^nf^{-a}(A_{s_{j+a}})
\]
for some $x_{s_j}\in A_{s_j}$. Let $\xi_s=(x_{s_j})_{j\in\mathbb{Z}}$ and note that $x_{s_j}\in\Lambda$ for all $j\in\mathbb{Z}$. For every $j\in\mathbb{Z}$, since
\[
f(x_{s_j})\in\bigcap_{a=-n}^nf^{-(a-1)}(A_{s_{j+a}})=\bigcap_{a=-n-1}^{n-1}f^{-a}(A_{s_{j+1+a}})
\]
and
\[
x_{s_{j+1}}\in\bigcap_{a=-n}^nf^{-a}(A_{s_{j+1+a}}),
\]
we have
\[
\{f(x_{s_j}),x_{s_{j+1}}\}\subset\bigcap_{a=-n}^{n-1}f^{-a}(A_{s_{j+1+a}})
\]
and thus
\[
\sup_{-n\le a\le n-1}d(f^a(f(x_{s_j})),f^a(x_{s_{j+1}}))<e.
\]
Since $f$ has shadowing on $\Lambda$, there is $\delta>0$ such that every $\delta$-pseudo orbit $(y_j)_{j\in\mathbb{Z}}$ of $f$ with $y_j\in\Lambda$ for all $j\in\mathbb{Z}$ is $c$-shadowed by some $y\in X$. If $n$ is sufficiently large, then by Lemma 2.6, we obtain
\[
\sup_{j\in\mathbb{Z}}d(f(x_{s_j}),x_{s_{j+1}})\le\delta,
\]
that is, $\xi_s$ is a $\delta$-pseudo orbit of $f$ with $x_{s_j}\in\Lambda$ for all $j\in\mathbb{Z}$ and so $c$-shadowed by some $x_s\in X$. This implies
\[
x_s\in\bigcap_{j\in\mathbb{Z}}f^{-j}(B_c(x_{s_j}))\subset\bigcap_{j\in\mathbb{Z}}f^{-j}(B_c(A_{s_j}))=\bigcap_{j\in\mathbb{Z}}f^{-j}(B_{s_j});
\]
therefore,
\[
\bigcap_{j\in\mathbb{Z}}f^{-j}(B_{s_j})\ne\emptyset.
\]
As $s=(s_j)_{j\in\mathbb{Z}}\in\Xi$ is arbitrary, we can define a map $h\colon \Xi\to X$ by
\[
h(s)\in\bigcap_{j\in\mathbb{Z}}f^{-j}(B_{s_j})
\]
for all $s=(s_j)_{j\in\mathbb{Z}}\in\Xi$. It follows that
\begin{itemize}
\item $h$ is injective,
\item by Lemma 2.6, $h$ is continuous,
\item $h\circ\sigma|_\Xi=f\circ h$, where $\sigma$ is the shift map,
\item $\Lambda=h(\Sigma)\subset h(\Xi)\subset B_c(\Lambda)$.
\end{itemize}
Letting $\Gamma_c=h(\Xi)$, we see that
\begin{itemize}
\item $f(\Gamma_c)=\Gamma_c$,
\item $\Lambda\subset\Gamma_c\subset B_c(\Lambda)$,
\item $h\colon\Xi\to\Gamma_c$ is a homeomorphism such that
\[
h\circ\sigma|_\Xi=f|_{\Gamma_c}\circ h;
\]
therefore, $f|_{\Gamma_c}$ has shadowing.
\end{itemize}
Since $0<c\le b$ is arbitrary, this completes the proof of the lemma.
\end{proof}

Finally, we complete the proof of Theorem 1.2.

\begin{proof}[Proof of Theorem 1.2]
The implication (1)$\implies$(2) (resp.\:(2)$\implies$(1)) is a consequence of Lemma 2.5 (resp.\:Lemma 2.2). We shall prove the implication (2)$\implies$(3). Assume that $h_{\rm top}(f|_{\Lambda_\epsilon})=0$ for some $\epsilon>0$. By (2)$\implies$(1), we have $Sen(f|_{CR(f|_\Lambda)})=\emptyset$. Since $f|_\Lambda$ is expansive, Theorem 1.1 and Remark 1.1 imply that $\Lambda$ is a countable set and so totally disconnected. Let $0<c<\min\{b,\epsilon\}$. By Lemma 2.7, we obtain a closed subset $\Gamma_c$ of $X$ such that
\begin{itemize}
\item $f(\Gamma_c)=\Gamma_c$ and $\Lambda\subset\Gamma_c\subset B_c(\Lambda)$,
\item $f|_{\Gamma_c}$ has shadowing.
\end{itemize}
Let $0<\eta\le b-c$. Since $f$ is expansive on $B_\eta(\Gamma_c)$ and $f|_{\Gamma_c}$ has shadowing, by Lemma 2.3, $\Gamma_c$ is locally maximal. By $\Gamma_c\subset B_c(\Lambda)\subset B_\epsilon(\Lambda)$, we obtain
\[
h_{\rm top}(f|_{\Gamma_c})\le h_{\rm top}(f|_{\Lambda_\epsilon})=0,
\]
i.e., $h_{\rm top}(f|_{\Gamma_c})=0$. As $\Lambda\subset\Gamma_c\subset B_c(\Lambda)$, the implication (2)$\implies$(3) has been proved. It remains to prove the  implication (3)$\implies$(2). Let $c>0$ and $\Gamma_c$ be as in $(3)$. Since $\Gamma_c$ is locally maximal, we have $\Gamma_c=\bigcap_{i\in\mathbb{Z}}f^i(B_\epsilon(\Gamma_c))$ for some $\epsilon>0$. By $B_\epsilon(\Lambda)\subset B_\epsilon(\Gamma_c)$, we obtain
\[
h_{\rm top}(f|_{\Lambda_\epsilon})\le h_{\rm top}(f|_{\Gamma_c})=0,
\]
i.e., $h_{\rm top}(f|_{\Lambda_\epsilon})=0$, thus the implication (3)$\implies$(2) has been proved. This completes the proof of the theorem.
\end{proof}

\appendix

\section{}

The aim of this appendix is to give an alternative proof of the following theorem.

\begin{thm}
For an expansive homeomorphism $f\colon X\to X$, if $X=Per(f)$, then $X$ is a finite set.
\end{thm}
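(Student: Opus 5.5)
Since this is meant to be an \emph{alternative} proof, I will not invoke the lemma from \cite{BS} on asymptotic pairs. Instead I will use a Baire category argument combined with expansiveness. Write
\[
X=Per(f)=\bigcup_{j\ge1}P_j,\qquad P_j=\{x\in X\colon f^j(x)=x\}.
\]
Each $P_j$ is closed, and $P_j$ is finite. To see the finiteness, let $e>0$ be an expansive constant. By uniform continuity of $f,f^2,\dots,f^{j}$ and of their inverses, there is $\gamma>0$ such that $d(x,y)\le\gamma$ implies $d(f^i(x),f^i(y))\le e$ for all $|i|\le j$. For $x,y\in P_j$ the orbits are $j$-periodic, so this gives $\sup_{i\in\mathbb{Z}}d(f^i(x),f^i(y))\le e$, hence $x=y$. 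Thus $P_j$ is $\gamma$-separated and therefore finite. In particular $X$ is countable.

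Next I apply the Baire category theorem to the closed subsets of $X$. Suppose, for contradiction, that $X$ is infinite. Then $X$ has an accumulation point, and the set $X'$ of accumulation points is a nonempty closed subset. The key observation is that $X'$ is itself a countable compact metric space, $f$-invariant, consisting of periodic points, and $f|_{X'}$ is expansive. Hence $X'$ cannot be perfect: a nonempty perfect compact metric space is uncountable. So the Cantor--Bendixson derivatives $X^{(\alpha)}$ strictly decrease until they become empty. Since $X$ is compact and countable, there is a last nonempty derivative $X^{(\alpha)}$, which is finite. We may then replace $X$ by the invariant closed set $X^{(\beta)}$ with $X^{(\beta+1)}$ finite and nonempty and $X^{(\beta)}$ infinite; this is possible whenever $X$ is infinite (if a limit stage is hit, the last nonempty derivative has a successor index by compactness). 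So without loss of generality $X'=F$ is a finite nonempty invariant set, which consists of periodic orbits. Passing to an iterate $f^N$, which is still expansive (with a smaller constant) and has $X=Per(f^N)$, I may assume every point of $F$ is fixed.

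The main step, and the main obstacle, is to derive a contradiction near a fixed point $p\in F$ that is a limit of isolated periodic points $x_n\to p$, with $x_n\ne p$ and $x_n$ of period $k_n$. Fix $\rho>0$ smaller than the expansive constant and smaller than half the distance between distinct points of $F$. By expansiveness, the orbit of $x_n$ must leave $B_\rho(p)$, while $x_n$ itself lies arbitrarily close to $p$ for large $n$. Choose $i_n\ge0$ to be the first time at which $f^{i_n+1}(x_n)\notin B_\rho(p)$. Then $i_n\to\infty$ by continuity, since $p$ is fixed. Set $y_n=f^{i_n}(x_n)$. A subsequence converges to some $y$ with $f^{-i}(y)\in B_\rho(p)$ for all $i\ge0$ and $d(f(y),p)\ge\rho$, so $y\neq p$. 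Now $y$ is periodic, and its backward orbit, which is then its whole orbit, stays in $B_\rho(p)$. Since $p$ is fixed, this gives $\sup_{i}d(f^i(y),f^i(p))\le\rho$, so $y=p$ by expansiveness. This contradicts $d(f(y),p)\ge\rho$, provided $\rho$ is chosen so that $f(B_\rho(p))$ still lies within the expansive constant of $p$; choosing $\rho$ small handles this. Therefore $X$ is finite.

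The delicate point I expect to check carefully is the periodicity argument: it requires that a periodic point whose backward orbit stays $\rho$-close to $p$ has its whole orbit $\rho$-close to $p$. This holds because backward and forward orbits of a periodic point coincide. Note that this argument alone already yields the contradiction directly from any non-isolated fixed point, so the Cantor--Bendixson reduction serves only to produce such a point with $F$ finite and fixed.
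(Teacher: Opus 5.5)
Your core argument, the first-exit limit at a non-isolated fixed point, is correct and proves the theorem by a different route from the paper's. The Cantor--Bendixson reduction, however, rests on a false claim and should be removed.

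\textbf{The false step.} It is not true that every infinite countable compact space has some $\beta$ with $X^{(\beta)}$ infinite and $X^{(\beta+1)}$ finite and nonempty. Compactness only forces the first \emph{empty} derivative to have successor index. The last nonempty derivative can sit at a limit index. For example, in the ordinal space $[0,\omega^\omega]$ every $X^{(n)}$ with $n<\omega$ is infinite, while $X^{(\omega)}=\{\omega^\omega\}$.

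\textbf{The fix.} Nothing downstream uses the finiteness of $F$; your condition on $\rho$ relative to distances in $F$ is never invoked. Take any non-isolated $p\in X$, which exists because $X$ is infinite and compact. Let $k$ be its period and replace $f$ by $f^k$, which is expansive and still satisfies $X=Per(f^k)$. Your argument then runs verbatim:
\begin{itemize}
\item the limit $y$ of the first-exit points satisfies $f^{-i}(y)\in B_\rho(p)$ for all $i\ge0$ and $d(f(y),p)\ge\rho$;
\item periodicity of $y$ puts its whole orbit in $B_\rho(p)$;
\item expansiveness then gives $y=p$, a contradiction.
\end{itemize}
The extra smallness condition on $f(B_\rho(p))$ is not needed. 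Neither is the finiteness of the $P_j$ or the countability of $X$.

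\textbf{Comparison with the paper.} The paper argues globally. It combines the standard fact $W^s_e(x)\subset W^s(x)$ with $X=Per(f)$ to get $W^s_e(x)=\{x\}$. Hence the set $Y$ of non-isolated points equals $Sen_e(f)$. Lemma A.1, itself a first-exit-time argument, then shows $Y$ is perfect. A nonempty compact perfect set is uncountable, contradicting the countability of $X$.

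Your route is local and more elementary. It bypasses countability, the perfect-set/Baire step, and the asymptotic property of local stable sets. You kill the limit of first exits directly with expansiveness instead of showing it is non-isolated. What the paper's route buys is Lemma A.1 as a by-product. That lemma holds for any homeomorphism with $X=Per(f)$, without expansiveness, and gives structural information such as that in Example A.1.
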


The proof of Theorem A.1 is through the following lemma.

\begin{lem}
Let $f\colon X\to X$ be a homeomorphism. Let
\[
Y=\{x\in X\colon x\in\overline{X\setminus\{x\}}\}
\]
and note that $Sen(f)\subset Y$. If $X=Per(f)$, then $x\in\overline{Y\setminus\{x\}}$ for all $x\in Sen(f)$; therefore, $Y\ne\emptyset$ and $Y\subset Sen(f)$ imply that $Y$ is a perfect set.
\end{lem}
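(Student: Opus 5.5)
The plan is to prove the first claim by contradiction, working in the closed ball around the sensitive point $x$, and then derive the perfectness statement from it as a formal consequence. Two facts come first.

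\emph{The set $Y$ is closed and contains $Sen(f)$.} $Y$ is the set of non-isolated points of $X$, which is closed. If $x$ is isolated, then for small $\delta$ the only $y$ with $d(x,y)\le\delta$ is $x$ itself, so $x\notin Sen(f)$. Hence $Sen(f)\subset Y$.

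\emph{Reduction of the second claim to the first.} Assume the first claim and suppose $\emptyset\ne Y\subset Sen(f)$. Then every $y\in Y$ lies in $\overline{Y\setminus\{y\}}$, so $Y$ has no isolated points. Since $Y$ is closed and nonempty, it is perfect.

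\emph{Setup for the first claim.} Assume $X=Per(f)$, let $x\in Sen_a(f)$, and suppose $x\notin\overline{Y\setminus\{x\}}$. Let $O=\{f^i(x)\colon i\in\mathbb{Z}\}$, which is finite because $x$ is periodic. Choose $r>0$ with $B_r(x)\cap Y\subset\{x\}$. Since $f$ is a homeomorphism, it maps isolated points to isolated points. Transporting $B_r(x)$ by the finitely many iterates $f^i$ and shrinking $r$, we get that every point of $B_r(O)\setminus O$ is isolated in $X$. Next fix $0<\rho<\min\{r,a\}$ with $\rho$ below one third of the minimal distance between distinct points of $O$. By uniform continuity we can also require the following: if $d(z,f^i(x))\le\rho'$ for a suitable $\rho'\le\rho$, then $d(f(z),f^{i+1}(x))<\tfrac13\min_{p\ne q\in O}d(p,q)$.

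\emph{Forcing orbits out of $B_\rho(O)$.} By sensitivity there are $y_k\to x$, $y_k\ne x$, with $d(f^{i_k}(x),f^{i_k}(y_k))>a$ for some $i_k\ge0$. Since $B_r(x)\setminus\{x\}$ consists of isolated points, only finitely many of the $y_k$ can coincide, so we may take them pairwise distinct. Work with the single radius $\rho'$. If $f^i(y_k)\in B_{\rho'}(O)$ for all $0\le i\le i_k$, an induction gives that the nearest point of $O$ to $f^i(y_k)$ is exactly $f^i(x)$. Then $d(f^{i_k}(y_k),f^{i_k}(x))\le\rho'<a$, which is a contradiction. So there is a first time $s_k\le i_k$ with $f^{s_k}(y_k)\notin B_{\rho'}(O)$, and $s_k\ge1$ once $y_k$ is close enough to $x$. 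The map $f$ sends $B_{\rho'}(O)$ into $B_r(O)$ when $\rho'$ is small enough. Therefore $z_k=f^{s_k}(y_k)$ lies in the compact set $A=B_r(O)\setminus\mathrm{int}\,B_{\rho'}(O)$, which is disjoint from $O$.

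\emph{Contradiction.} Suppose some point $z$ equals $z_k$ for infinitely many $k$. Then those $y_k=f^{-s_k}(z)$ all lie in the orbit of $z$, which is finite because $z$ is periodic. This is impossible for infinitely many distinct $y_k$. So $\{z_k\}$ is infinite and has an accumulation point in the compact set $A$. That point is non-isolated and lies in $B_r(O)\setminus O$, contradicting the choice of $r$. This contradiction proves the first claim.

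The step I expect to need the most care is the orbit-tracking induction in the third paragraph. The choice of $\rho'$ has to guarantee both that $f^{s_k}(y_k)$ lands in $B_r(O)$ and that staying in $B_{\rho'}(O)$ keeps $f^i(y_k)$ next to $f^i(x)$ rather than next to some other point of $O$.
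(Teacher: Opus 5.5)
Your proof is correct and follows essentially the same route as the paper: take the first time the orbit of a nearby point leaves a small neighborhood, use continuity to trap the exit points in a compact region away from $x$, take a limit (accumulation) point, and use periodicity of that point to rule out its being isolated, since otherwise the $y_k$ would lie in a finite orbit. The only real differences are that the paper passes to $g=f^k$ so that $x$ is a fixed point, which makes your orbit-tracking induction around $O$ unnecessary, and that it argues directly rather than by contradiction.
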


\begin{proof}[Proof of Lemma A.1]
Let $x\in Sen(f)$. Since $X=Per(f)$, we have $f^k(x)=x$ for some $k\ge1$. Let $g=f^k\colon X\to X$ and note that $g(x)=x$. As $x\in Sen(f)$, we have $x\in Sen(g)$ and so $x\in Sen_a(g)$ for some $a>0$. Let $\epsilon>0$. We take $0<\delta<a$ such that $d(x,w)\le\delta$ implies $d(x,g(w))\le\epsilon$ for all $w\in X$. Since $x\in Sen_a(g)$, there is a sequence $x_j\in X\setminus\{x\}$, $j\ge1$, such that
\begin{itemize}
\item $\sup_{j\ge1}d(x,x_j)\le\delta$ and $\lim_{j\to\infty}x_j=x$,
\item $\sup_{i\ge0}d(x,g^i(x_j))>a$ for all $j\ge1$.
\end{itemize}
Given $j\ge1$, let
\[
i_j=\min\{i\ge1\colon d(x,g^i(x_j))>\delta\}.
\]
Since $d(x,g^{i_j-1}(x_j))\le\delta$, we have $d(x,g^{i_j}(x_j))\le\epsilon$ and so
\[
\delta<d(x,g^{i_j}(x_j))\le\epsilon.
\]
By taking a subsequence if necessary, we may assume that $\lim_{j\to\infty}g^{i_j}(x_j)=y$ for some $y\in X$. Note that $\delta\le d(x,y)\le\epsilon$. Since $X=Per(f)$, we have $f^l(y)=y$ for some $l\ge1$. Let
\[
\mathcal{O}_f(y)=\{f^i(y)\colon 0\le i\le l-1\}.
\]
If $y\not\in Y$, then $y\not\in\overline{X\setminus\{y\}}$; therefore, $g^{i_j}(x_j)=y$ for all sufficiently large $j$. This implies $x_j\in\mathcal{O}_f(y)$ for all sufficiently large $j$, which is a contradiction. Thus, we obtain $y\in Y$. As $\delta\le d(x,y)\le\epsilon$ and $\epsilon>0$ is arbitrary, we obtain $x\in\overline{Y\setminus\{x\}}$. Since $x\in Sen(f)$ is arbitrary, the lemma has been proved.
\end{proof}

For the proof of Theorem A.1, we need a notation and a related fact. Let $f\colon X\to X$ be a homeomorphism. For $x\in X$ and $r>0$, we define a subset $W_r^s(x)$ of $X$ by
\[
W_r^s(x)=\{y\in X\colon\sup_{i\ge0}d(f^i(x),f^i(y))\le r\}.
\]
We know that if $e>0$ is an expansive constant for $f$, then $W_e^s(x)\subset W^s(x)$ for all $x\in X$.

\begin{proof}[Proof of Theorem A.1]
Since $f$ is expansive and $X=Per(f)$, $X$ is a countable set. Let $e>0$ be an expansive constant for $f$. For any $x\in X$, we have $W_e^s(x)\subset W^s(x)$ and so $W_e^s(x)=\{x\}$, because $X=Per(f)$. It follows that
\[
\sup_{i\ge0}d(f^i(x),f^i(y))>e
\]
for all $x,y\in X$ with $x\ne y$. Letting
\[
Y=\{x\in X\colon x\in\overline{X\setminus\{x\}}\},
\]
we obtain $Y=Sen_e(f)(\subset Sen(f))$. If $Y\ne\emptyset$, then by Lemma A.1, $Y$ is compact and perfect; therefore, $Y$ is an uncountable set, which is a contradiction. We conclude that $Y=\emptyset$ and thus $X$ is a finite set, completing the proof.
\end{proof}

As the following example shows, for a homeomorphism $f\colon X\to X$, $X=Per(f)$ does not necessarily imply $Sen(f)=\emptyset$.

\begin{ex}
\normalfont
Let $S^1=\{z\in\mathbb{C}\colon|z|=1\}$ and let
\[
z_{n,k}=\left(1-\frac{1}{n}\right)e^{2\pi i\cdot\frac{k}{2^{n-1}}}
\]
for all $n\ge1$ and $k\ge0$. Let
\[
X=S^1\cup\bigcup_{n\ge1}\left\{z_{n,k}\colon k\in\left\{0,1,\dots,2^{n-1}-1\right\}\right\}.
\]
We define a homeomorphism $f\colon X\to X$ by
\begin{itemize}
\item $f(z)=z$ for all $z\in S^1$,
\item $f(z_{n,k})=z_{n,k+1}$ for all $n\ge1$ and $k\in\left\{0,1,\dots,2^{n-1}-1\right\}$.
\end{itemize}
We easily see that $X=Per(f)$ and $S^1=Sen(f)$.
\end{ex}

\end{document}